\title{The probability that a random triple of dice is transitive}
\author{D. H. J. Polymath}
\newtheorem{theorem}{Theorem}[section]
\newtheorem{lemma}[theorem]{Lemma}
\newtheorem{claim}[theorem]{Claim}
\newtheorem{corollary}[theorem]{Corollary}
\newtheorem{conjecture}[theorem]{Conjecture}
\def\e{\epsilon}
\def\E{\mathbb{E}}
\def\Z{\mathbb{Z}}
\def\R{\mathbb{R}}
\def\T{\mathbb{T}}
\def\P{\mathbb{P}}
\def\a{\alpha}
\def\be{\beta}
\def\b1{\mathbbm{1}}
\def\ol{\overline}
\def\var{\mathop{\rm var}}
\begin{document}
\begin{abstract}
An $n$-\emph{sided die} is an $n$-tuple of positive integers. We say that a die $(a_1,\dots,a_n)$ \emph{beats} a die $(b_1,\dots,b_n)$ if the number of pairs $(i,j)$ such that $a_i>b_j$ is greater than the number of pairs $(i,j)$ such that $a_i<b_j$. We show that for a natural model of random $n$-sided dice, if $A, B$ and $C$ are three random dice then the probability that $A$ beats $C$ given that $A$ beats $B$ and $B$ beats $C$ is approximately 1/2. In other words, the information that $A$ beats $B$ and $B$ beats $C$ has almost no effect on the probability that $A$ beats $C$. This proves a statement that was conjectured by Conrey, Gabbard, Grant, Liu and Morrison for a different model.
\end{abstract}
\maketitle

\section{Introduction}

It is an amusing fact, first observed by Bradley Efron in the 1960s, that there can be four dice $A_0,A_1,A_2,A_3$, each with six sides but with non-standard numberings such that if they are all rolled, then each of the four events ``$A_i$ shows a higher number than $A_{i+1}$" occurs with probability 2/3 (where $i+1$ is interpreted mod 4). Thus, if we say that one die \emph{beats} another if it has a better-than-50\% chance of showing a higher number, then the relation ``beats" is not transitive.

Much more recently, Conrey, Gabbard, Grant, Liu and Morrison \cite{CGG}
decided to investigate how common a phenomenon intransitivity is. They defined a notion of random $n$-sided dice, defined a suitable relation ``beats" for such dice, and did some computer experiments that indicated, to their surprise, that if $A, B$ and $C$ are three random dice, then the probability that $A$ beats $C$ given that $A$ beats $B$ and $B$ beats $C$ is, for large $n$, approximately equal to 1/2. That is, the information that $A$ beats $B$ and $B$ beats $C$ gives almost no clue about whether $A$ beats $C$. 

The definition they gave of a random $n$-sided die, which we shall refer to as the \emph{multiset model}, is as follows: they define an $n$-\emph{sided die} to be a multiset with $n$ elements that add up to $n(n+1)/2$ (or equivalently average $(n+1)/2$), and a random $n$-sided die is simply an $n$-sided die chosen uniformly at random. An equivalent definition is that a random $n$-sided die is a random non-decreasing sequence $(a_1,\dots,a_n)$ of positive integers between 1 and $n$ that add up to $n(n+1)/2$. For example, the 4-sided dice are $(1,1,4,4), (1,2,3,4), (1,3,3,3),(2,2,2,4)$, and $(2,2,3,3)$. 

Given two random $n$-sided dice $A=(a_1,\dots,a_n)$ and $B=(b_1,\dots,b_n)$, we say that $A$ \emph{beats} $B$ if the number of pairs $(i,j)$ such that $a_i>b_j$ is greater than the number of pairs $(i,j)$ such that $a_i<b_j$. For example, the die $A=(1,1,4,4)$ beats the die $B=(1,3,3,3)$ because there are eight pairs $(i,j)$ with $a_i>b_j$ and only six with $a_i<b_j$. If the two numbers are equal we say that $A$ \emph{ties with} $B$.

Conrey, Gabbard, Grant, Liu and Morrison \cite{CGG} made the following two conjectures.

\begin{conjecture} \label{ties}
Let $n$ be a positive integer and let $A$ and $B$ be independent random $n$-sided dice in the multiset model. Then the probability that $A$ ties with $B$ is $o(1)$.
\end{conjecture}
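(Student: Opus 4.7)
The plan is to prove a quantitative anti-concentration statement: with high probability over $B$, the conditional distribution of $X(A,B) := N^+(A,B) - N^-(A,B)$ given $B$ places mass $o(1)$ on every integer, and in particular on $0$. Taking expectations in $B$ will then yield $\P(X=0) = o(1)$.

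First I would reformulate $X$. Write $m_k^A = |\{i : a_i = k\}|$ and $F_B(k) = |\{j : b_j \le k\}|$. From $N^+ = \sum_i F_B(a_i - 1)$ and $N^- = \sum_i (n - F_B(a_i))$ one gets
\[
X = \sum_{k=1}^n m_k^A \bigl[F_B(k-1) + F_B(k)\bigr] - n^2.
\]
Setting $\delta_k^B := F_B(k) - k$ and using the dice constraints $\sum_k m_k^A = n$, $\sum_k k\, m_k^A = n(n+1)/2$, together with $\sum_{k=1}^{n-1} \delta_k^B = 0$ (equivalent to the sum condition on $B$), all constant and linear terms collapse and we obtain the bilinear form
\[
X = \sum_{k=1}^n (m_k^A - 1)\,(\delta_{k-1}^B + \delta_k^B), \qquad \delta_0^B = \delta_n^B = 0.
\]

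For the second step I would show that with high probability over $B$ the coefficient vector $c_k^B := \delta_{k-1}^B + \delta_k^B$ is ``generic''. A uniformly random die corresponds, via its counting function, to a constrained lattice path which on scale $\sqrt n$ resembles a Brownian bridge conditioned on its integral; hence typically $|\delta_k^B| = \Theta(\sqrt n)$ for a positive fraction of $k$, and the $c_k^B$ are spread over a range of size $\Omega(\sqrt n)$ with relatively few repetitions. A variance calculation then suggests $\var(X \mid B) = \Theta(n^2)$, so one expects $\P(X = c \mid B) = O(1/n)$ for every integer $c$. To make this rigorous I would bound the conditional characteristic function $\E[e^{itX} \mid B]$ on $[-\pi,\pi]$: after comparing the multiset model with a product of independent mean-$1$ geometrics subject to the two linear constraints, this transform becomes (up to controlled corrections) a product $\prod_k \E\bigl[e^{it(m_k^A - 1) c_k^B}\bigr]$, which for generic $c_k^B$ is uniformly $o(1)$ away from $t = 0$, and Fourier inversion gives the claim.

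The main obstacle is the absence of independence: $(m_k^A)_{k=1}^n$ lives on a codimension-$2$ affine slice of $\Z^n$ rather than being a product measure, so standard Erd\H{o}s--Littlewood--Offord or Esseen inequalities do not apply off the shelf. The cleanest workaround is probably to compare the multiset model with an unconstrained product of mean-$1$ geometrics (a standard reference distribution for constrained partitions) and to pay only a $\mathrm{poly}(n)$ factor for imposing the two linear constraints, which is affordable since the target bound is only $O(1/n)$. A more combinatorial alternative is a swapping argument: each legal exchange $(a_i, a_j) \mapsto (a_i + 1, a_j - 1)$ preserves the dice space and shifts $X$ by the small and typically nonzero quantity $c_{a_i+1}^B - c_{a_i}^B + c_{a_j}^B - c_{a_j-1}^B$; running a random walk built from many such swaps should smear the distribution of $X$ across many consecutive integers, again delivering $\P(X = 0) = o(1)$.
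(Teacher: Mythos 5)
The paper does not prove Conjecture \ref{ties}; it is stated as a conjecture and left open. The theorems actually proved, Theorems \ref{ties-thm} and \ref{transitive-thm}, concern the \emph{balanced sequences model}, in which a die is a uniform element of $[n]^n$ conditioned on its sum, rather than the multiset model. So there is no proof in the paper for your proposal to match, and a complete proof along your lines would be new progress beyond what the paper achieves.

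Assessing the sketch on its own terms: the algebraic reformulation $X=\sum_{k}(m_k^A-1)(\delta_{k-1}^B+\delta_k^B)$ is correct (using $\sum_k m_k^A=n$, $\sum_k k\,m_k^A=n(n+1)/2$, $\delta_0^B=\delta_n^B=0$ and $\sum_{k=1}^{n-1}\delta_k^B=0$) and is a sensible starting point. But every subsequent step is a gap, and each one is essentially the whole problem. The claim that $|\delta_k^B|=\Theta(\sqrt n)$ on a constant fraction of $k$, and more generally that $\delta_k^B$ has a Brownian-bridge profile, is not established for the multiset model: a uniform multiset is not a conditioned i.i.d.\ sample but (via the bijection $c_i=b_i+i-1$) a uniform $n$-subset of $[2n-1]$ conditioned to sum to $n^2$, and one would have to prove a local limit theorem for this object just to certify the counting-function fluctuations you assume. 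The anti-concentration step --- bounding $\E[e(tX)\mid B]$ over the random multiset $A$ --- is, as you note, exactly where the lack of product structure bites, and the two suggested workarounds (transfer to an unconstrained geometric reference measure with a $\mathrm{poly}(n)$ correction for the two linear constraints, or a smoothing random walk over legal swaps) are gestured at rather than carried out; the first already requires a quantitative two-dimensional local CLT for the geometric reference measure, which is of the same order of difficulty as the Fourier-analytic work of Sections~5--6 in the far tamer conditioned i.i.d.\ setting, and the second has no estimate on the mixing or on the typical size of the increments $c_{a_i+1}^B-c_{a_i}^B+c_{a_j}^B-c_{a_j-1}^B$. In short, the bilinear identity is correct and useful, but the conclusion $\P(X=0\mid B)=o(1)$ for typical $B$ --- which is the entire content of the conjecture --- is not derived, and the ``obstacles'' you list coincide with the unsolved part of the problem.
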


\begin{conjecture} \label{transitive}
Let $n$ be a positive integer and let $A,B$ and $C$ be independent random $n$-sided dice in the multiset model. Then the probability that $A$ beats $C$ given that $A$ beats $B$ and $B$ beats $C$ is $\frac 12+o(1)$. 
\end{conjecture}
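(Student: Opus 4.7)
The plan is to reformulate ``$A$ beats $B$'' as the sign of an explicit bilinear form in ``profile functions'' of the dice, and then to prove the three asymptotic factorisations
\[
  \P(A\succ B)\to\tfrac12,\quad \P(A\succ B,\,B\succ C)\to\tfrac14,\quad \P(A\succ B,\,B\succ C,\,A\succ C)\to\tfrac18,
\]
where $A\succ B$ abbreviates ``$A$ beats $B$''. The conditional probability in Conjecture~\ref{transitive} is then $(1/8)/(1/4)=1/2$. Each factorisation reduces, by conditioning on one die at a time, to a concentration or near-orthogonality statement about the bilinear form.

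For the bilinear reformulation, put $f_A(k):=|\{i:a_i\le k\}|-k$ for each die $A$; then $f_A(0)=f_A(n)=0$, and the sum constraint $\sum a_i=n(n+1)/2$ is equivalent to $\sum_{k=0}^{n-1}f_A(k)=0$. A direct Abel summation, in which these two linear constraints are used precisely to eliminate the otherwise non-bilinear terms, gives
\[
  \Delta(A,B):=|\{(i,j):a_i>b_j\}|-|\{(i,j):a_i<b_j\}|=\sum_{k=1}^n\bigl(f_B(k)+f_B(k-1)\bigr)\bigl(f_A(k)-f_A(k-1)\bigr),
\]
so $A\succ B$ is simply the event $\{\Delta(A,B)>0\}$, and $\Delta$ is visibly antisymmetric in $(A,B)$.

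The first factorisation, $\P(A\succ B)\to 1/2$, follows from the symmetry $\Delta(A,B)\stackrel{d}{=}-\Delta(A,B)$ induced by the multiset-preserving bijection $(a,b)\mapsto(n+1-a,n+1-b)$, combined with Conjecture~\ref{ties}. For the second, let $p(B):=\P(A\succ B\mid B)$; conditioning on $B$ makes the events $\{A\succ B\}$ and $\{B\succ C\}$ independent functions of $A$ and $C$ respectively, so $\P(A\succ B,\,B\succ C)=\E p(B)^2=(1/2)^2+\var(p(B))$ and the claim reduces to $\var(p(B))=o(1)$, i.e.\ concentration of $p(B)$ near $1/2$. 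For the third, condition on $A$ and $B$: the joint event $\{B\succ C\}\cap\{A\succ C\}$ is a function of $C$ alone, and it suffices to show that for typical independent $A,B$ the two linear-in-$f_C$ functionals $c\mapsto\Delta(A,c)$ and $c\mapsto\Delta(B,c)$ are asymptotically uncorrelated, so that their two signs are approximately independent.

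The main obstacle lies in the concentration estimates behind the second and third factorisations: each amounts to a Berry--Esseen-type central limit theorem for the bilinear statistic $\Delta$ as a function of one die, with bounds that are uniform in the other. In the multiset model the non-decreasing (multiset) constraint on $A$ entangles the fluctuations of $f_A$ across scales and makes a direct CLT delicate; the cleanest route may be first to establish the result in a less constrained ``balanced sequences'' model, where the $a_i$ are i.i.d.\ uniform subject only to the single sum constraint $\sum a_i=n(n+1)/2$, and then to transfer back to the multiset model by a suitable comparison. Conjecture~\ref{ties} is needed throughout to guarantee $\P(\Delta=0)=o(1)$, so that the ``beats'' events really do have limiting probability $1/2$ and the conditional probability is not distorted by ties.
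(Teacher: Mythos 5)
The statement you are attempting to prove is Conjecture~\ref{transitive}, which is about the \emph{multiset} model. The paper does \emph{not} prove this conjecture: it remains open there. What the paper proves are Theorems~\ref{ties-thm} and~\ref{transitive-thm}, the analogues for the \emph{balanced sequences} model, and the abstract explicitly says so (``This proves a statement that was conjectured\ldots for a different model''). You correctly sense, at the end of your sketch, that the balanced sequences model is the tractable one, and your plan is to ``transfer back to the multiset model by a suitable comparison.'' That transfer is precisely the step that nobody, including the authors of this paper, knows how to do, and nothing in the paper supplies it. It is not a routine comparison: the two models assign genuinely different weights (one per multiset, one per sequence), and the paper emphasizes that Conjecture~\ref{transitive} is highly sensitive to the model, failing for several natural variants. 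So as a proof of the stated conjecture, your proposal has a fatal gap at the final ``suitable comparison'' step.

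For the part of your plan that mirrors the paper's actual argument for the balanced sequences model, the high-level reduction is in the right spirit but not quite the one used. The paper (Lemma~\ref{equivalent} plus Claim~\ref{main}) reduces everything to a single concentration statement: with probability $1-o(1)$ a random die beats approximately half the others. Your reduction to ``$\var(p(B))=o(1)$'' is the same target. However, your algebra for the second factorisation is off: conditioning on $B$ gives
\[
\P(A\succ B,\ B\succ C)=\E\bigl[p(B)\,\P(B\succ C\mid B)\bigr]\approx\E\bigl[p(B)\,(1-p(B))\bigr]=\tfrac12-\E[p(B)^2],
\]
assuming ties are negligible; it is not $\E[p(B)^2]$, and the error term comes in with the opposite sign from what you wrote. (The conclusion that concentration of $p(B)$ gives $1/4+o(1)$ is still correct, but the displayed identity is wrong.) Your third factorisation, via asymptotic uncorrelatedness of the functionals $\Delta(A,\cdot)$ and $\Delta(B,\cdot)$, is both unnecessary and unsupported as stated: once you know the probability of an intransitive triple is $\tfrac14+o(1)$, the counting of orientations of a triangle (six transitive, two cyclic) already forces $\P(A\succ B,\ B\succ C,\ A\succ C)\to\tfrac18$, which is how the paper's Lemma~\ref{equivalent} closes the loop. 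Finally, the hard content of the paper --- the local central limit theorem of Sections 5--6, with the characteristic-function bounds of Lemma~\ref{fcbound} and Corollary~\ref{smalloutsidebox} and the explicit Gaussian approximation of Lemma~\ref{gaussianapproximation} --- is not touched by your sketch; ``a Berry--Esseen-type central limit theorem for the bilinear statistic'' names the obstacle but does not engage with why a uniform, explicit two-dimensional local CLT (rather than an off-the-shelf Berry--Esseen bound) is required.
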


They also conjectured a strengthening of Conjecture \ref{transitive}, which is the following. Recall that a \emph{tournament} is a complete graph for which every edge is given a direction. We shall regard it as a set $T$ of ordered pairs of distinct elements of a set $V$ such that for any two distinct elements $v,w$ of $V$ exactly one of $(v,w)$ or $(w,v)$ belongs to $T$.

\begin{conjecture} \label{quasirandom}
Let $T$ be a tournament with vertices $1,2,\dots,k$. Then if $A_1,\dots,A_k$ are independent random $n$-sided dice in the multiset model, then the probability that for each $1\leq i<j\leq k$ we have that $A_i$ beats $A_j$ if and only if $(i,j)\in T$ is $2^{-\binom k2}+o(1)$.
\end{conjecture}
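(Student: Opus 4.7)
The plan is to reduce Conjecture \ref{quasirandom} to a joint central limit theorem for the ``score'' statistics of pairs of dice, and then exploit the resulting sign symmetry. For a die $A = (a_1, \dots, a_n)$ define $g_A : [n] \to \Z$ by
\[
g_A(t) = |\{i : a_i > t\}| - |\{i : a_i < t\}|,
\]
and set $N(A,B) := \sum_j g_A(b_j) = |\{(i,j) : a_i > b_j\}| - |\{(i,j) : a_i < b_j\}|$. Then $A$ beats $B$ iff $N(A,B) > 0$, and by Conjecture \ref{ties} we may ignore the $o(1)$ event that $N(A,B) = 0$. So the conjecture reduces to showing that, for $A_1, \dots, A_k$ independent random dice, the sign vector $(\mathrm{sgn}\, N(A_i, A_j))_{1 \leq i < j \leq k}$ is asymptotically uniform on $\{\pm 1\}^{\binom{k}{2}}$.

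The first step I would carry out is a one-pair CLT: fix a die $A$, view $B$ as a uniformly random composition with sum $n(n+1)/2$, and prove that $N(A,B)$, properly centred and scaled, converges in distribution to $\mathcal{N}(0,1)$ for all but an $o(1)$ fraction of dice $A$. The identity $\sum_t g_A(t) = 0$ (a consequence of $\sum_i a_i = n(n+1)/2$) shows that $g_A$ is orthogonal to constants, so the conditional mean of $N(A,B)$ is small compared to its standard deviation. A CLT for linear statistics of uniform random compositions -- derivable by Fourier inversion on $\Z$ comparing $B$ with $n$ i.i.d.\ samples subject to a sum constraint, or by Stein's method -- then supplies the Gaussian limit.

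The second step is the joint CLT for the vector $(N(A_i, A_j))_{i<j}$. Pairs of indices that share no common element give independent scores, since the underlying dice are independent. For two pairs sharing a single index, say $(i,j)$ and $(i,j')$, conditioning on $A_i$ makes the two scores conditionally independent linear functionals of the independent dice $A_j$ and $A_{j'}$, each handled by the one-pair CLT. Passing from this conditional picture to a genuine joint Gaussian limit requires showing that the random function $g_{A_i}$ is concentrated around a deterministic profile, so that the mixture of conditional Gaussians collapses to a single unconditional Gaussian with diagonal covariance. The Hoeffding decomposition of each $N(A_i, A_j)$ into pieces depending on $A_i$ alone, $A_j$ alone, and a genuinely bilinear remainder organises the argument: the bilinear remainder dominates after scaling, and its contributions across distinct pairs are asymptotically uncorrelated. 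This gives joint convergence to a standard Gaussian on $\R^{\binom{k}{2}}$, whose sign pattern is uniform on $\{\pm 1\}^{\binom{k}{2}}$, yielding probability $2^{-\binom{k}{2}} + o(1)$ for any prescribed tournament.

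I expect the main obstacle to be the concentration of $g_A$ around a deterministic profile in the shared-index case: without it, the mixture of conditional Gaussians can retain nontrivial sign correlations between $N(A_i, A_j)$ and $N(A_i, A_{j'})$, spoiling the quasirandomness. Establishing this will require a quantitative, Berry--Esseen-type version of the one-pair CLT, uniform over all but a negligible fraction of dice $A$, together with a description of the typical $g_A$ as (a rescaled version of) a Brownian-bridge-like limit. Once this analytic input is in hand, the remainder of the argument is a fairly standard multivariate CLT combined with invariance of the limit under sign flips.
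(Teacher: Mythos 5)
Conjecture \ref{quasirandom} is \emph{not} proved in this paper---it is stated as a conjecture and left open; the theorems proved here cover only Conjectures \ref{ties} and \ref{transitive} (the $k=3$ case), and even those only in the balanced-sequences model. Moreover, the ``Subsequent work'' subsection records that \cite{fourdice} has since \emph{disproved} the analogue of Conjecture \ref{quasirandom} in the balanced-sequences model with continuous faces. An argument such as yours, which would apply uniformly to any such model, cannot therefore be correct, and the failure lies precisely where you flag the main obstacle.

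The step that breaks is the claim that the mixture of conditional Gaussians ``collapses to a single unconditional Gaussian with diagonal covariance.'' This would require $g_A$ to concentrate around a deterministic profile, and it does not: after the natural $\sqrt n$ rescaling, $g_A$ converges to a genuinely random Brownian-bridge-type process, and that randomness survives in the limit. For a single shared index the damage is contained---given $A_i$, the scores $N(A_i,A_j)$ and $N(A_i,A_{j'})$ are centred, conditionally independent Gaussians, so their signs are conditionally uniform and independent no matter what the $A_i$-dependent conditional variances turn out to be, which is why the $k=3$ case survives. For $k\ge 4$ it does not. Conditioning on $A_1$ and $A_3$ leaves $N(A_1,A_2)$ and $N(A_3,A_2)$ correlated through $A_2$, with conditional correlation coefficient governed by the normalised inner product $\sum_j g_{A_1}(j)g_{A_3}(j)\big/\bigl(\|g_{A_1}\|_2\|g_{A_3}\|_2\bigr)$. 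Since $g_{A_1}$ and $g_{A_3}$ behave like independent Brownian bridges, this quantity has a non-degenerate limiting distribution, so $\rho(A_1,A_3):=\E_B\bigl[\mathrm{sgn}\,N(A_1,B)\cdot\mathrm{sgn}\,N(A_3,B)\bigr]$ does not converge to $0$ in probability. Writing $\sigma_{ij}=\mathrm{sgn}\,N(A_i,A_j)$ and conditioning on $A_1,A_3$ gives
\[
\E\bigl[\sigma_{12}\,\sigma_{32}\,\sigma_{34}\,\sigma_{14}\bigr]
=\E\bigl[\rho(A_1,A_3)^2\bigr]+o(1),
\]
which is bounded away from zero, whereas quasirandomness in the Chung--Graham sense (equivalent to the conjecture) would require this $4$-cycle sign statistic to be $o(1)$. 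This is exactly the statistic exploited by \cite{fourdice}. The obstruction is structural, not technical, and no Berry--Esseen strengthening of the one-pair CLT will remove it.

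A secondary issue: your reduction also invokes Conjecture \ref{ties} in the multiset model, which is not established in this paper either; Theorem \ref{ties-thm} addresses only the balanced-sequences model.
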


The conclusion about the tournament $T$ is stating that it is \emph{quasirandom} in a sense introduced by Chung and Graham \cite{CG}. It turns out to be equivalent to the statement that for all but a fraction $o(1)$ of the pairs of vertices $x,y$, the fraction of vertices $z$ such that either $(x,z)$ and $(y,z)$ belong to $T$ or $(z,x)$ and $(z,y)$ belong to $T$ is $\frac 12+o(1)$. There are many different equivalent conditions for quasirandomness: the one conjectured to hold by Conrey, Gabbard, Grant, Liu and Morrison states that all small tournaments occur in $T$ with approximately the frequency one would expect in a random tournament.

Conrey, Gabbard, Grant, Liu and Morrison also looked at other models, and the experimental evidence was surprisingly sensitive to the model chosen, with Conjecture \ref{transitive} (and hence also Conjecture \ref{quasirandom}) appearing to be false for most of them. However, there was one other model for which it seemed to be true, which we shall refer to as the \emph{balanced sequences model}. Here an $n$-sided die is simply a sequence $(a_1,\dots,a_n)$ of elements of $\{1,2,\dots,n\}$ that adds up to $n(n+1)/2$ and a random $n$-sided die is an $n$-sided die chosen uniformly at random. Note that permuting a sequence does not affect which other sequences it beats. If we say that two dice that are permutations of one another are \emph{equivalent}, then the difference between the balanced sequences model and the multiset model is that the multiset model gives the same weight to each equivalence class, while the balanced sequences model gives the same weight to each individual sequence.

The main results of this paper are that Conjectures \ref{ties} and \ref{transitive} are true for the balanced sequences model.

\begin{theorem} \label{ties-thm}
Let $n$ be a positive integer and let $A$ and $B$
be independent random $n$-sided dice in the balanced sequences model. 
Then the probability that $A$ ties with $B$ is $o(1)$.
\end{theorem}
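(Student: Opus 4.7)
The first step is to express $f(A,B)$ as a linear form in the entries of $A$. Using the identity $\operatorname{sgn}(k-\ell) = \mathbbm{1}[k \geq \ell] + \mathbbm{1}[k > \ell] - 1$ for integers, a short calculation gives
\[
f(A,B) = \sum_{i=1}^n s_B(a_i), \qquad s_B(k) := F_B(k-1) + F_B(k) - n,
\]
where $F_B(k) := \#\{j : b_j \leq k\}$. Because $B$ is balanced, $\sum_k F_B(k) = n(n+1)/2$, so on average $F_B(k) \approx k$. It is natural to split $s_B(k) = (2k-n-1) + r_B(k)$ with $r_B(k) := 2(F_B(k)-k) - N_B(k) + 1$ a ``noise term'' of typical magnitude $\sqrt{n}$, and to observe that $\sum_i (2a_i - n - 1) = 0$ whenever $A$ is balanced, so in that case
\[
f(A,B) = \sum_{i=1}^n r_B(a_i).
\]
The theorem therefore reduces to showing that, for a $1-o(1)$ fraction of balanced dice $B$, a uniformly random balanced $A$ satisfies $\sum_i r_B(a_i) \neq 0$ with probability $1-o(1)$.

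I would prove this anti-concentration statement via Fourier inversion on $\mathbb{Z}^2$. Writing $\mu := n(n+1)/2$ and treating the coordinates of $A$ as independent and uniform on $\{1,\dots,n\}$ (before imposing the balance constraint), one gets
\[
\P[A \text{ balanced, } f(A,B) = 0] = \int_0^1 \int_0^1 e^{-2\pi i \theta \mu}\, \psi_B(\theta, \phi)^n \, d\theta \, d\phi,
\]
with $\psi_B(\theta, \phi) := n^{-1} \sum_{k=1}^n e^{2\pi i (\theta k + \phi r_B(k))}$. A standard one-dimensional local central limit theorem handles the $\phi = 0$ slice and yields $\P[A \text{ balanced}] = \Theta(n^{-3/2})$, so it suffices to show that the two-dimensional integral above is $o(n^{-3/2})$ for most $B$. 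This in turn follows by bounding $|\psi_B|$ strictly away from $1$ outside a small neighbourhood of the origin, and showing a Gaussian-type decay $|\psi_B(\theta,\phi)|^n \leq \exp(-c(n^3 \theta^2 + n^2 \phi^2))$ inside the neighbourhood, where the scaling reflects $\var(a) = \Theta(n^2)$ and $\var(r_B(a)) = \Theta(n)$ for typical $B$.

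The hard part is the uniform Fourier bound on $\psi_B$ for $\phi$ bounded away from $0$: one must rule out any conspiracy whereby $\theta k + \phi r_B(k)$ is close to an integer for most $k \in \{1,\dots,n\}$. I would exploit the distribution of $r_B$ for a random balanced $B$: the centred CDF $F_B(k) - k$ behaves like a discrete Brownian bridge of typical size $\sqrt n$, while $N_B(k)$ is approximately an independent Poisson($1$) variable. This pseudorandomness should guarantee, with probability $1 - o(1)$ over $B$, that for every small modulus $q$ the pairs $(k \bmod q, r_B(k) \bmod q)$ approximately equidistribute in $(\mathbb{Z}/q\mathbb{Z})^2$, thereby forcing $|\psi_B(\theta,\phi)| \leq 1 - c$ for $(\theta,\phi)$ at bounded distance from any lattice line. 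A purely combinatorial alternative, which may be cleaner, is a swapping argument: the exchange $(a_i, a_j) \mapsto (a_i+1, a_j-1)$ preserves the balance constraint and changes $f(A,B)$ by $N_B(a_i) + N_B(a_i+1) - N_B(a_j-1) - N_B(a_j)$, and a careful enumeration of such moves for typical $B$ yields anti-concentration via a Littlewood--Offord-type argument.
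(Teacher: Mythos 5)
Your overall reduction and strategy match the paper's: express the victory margin as a sum of centered cumulative-distribution-function values at the faces of the other die, use the balance constraint to kill the linear part, and reduce the tie probability to a two-dimensional local central limit theorem for the joint law of (balance, margin), proved by Fourier inversion with a Gaussian approximation near the origin and a uniform bound keeping $|\psi_B|$ away from $1$ outside a small box. (You fix $B$ and randomize $A$; the paper fixes $A$ and randomizes $B$ --- an immaterial symmetry.) The algebra --- $s_B(k)=F_B(k-1)+F_B(k)-n$, the split into a linear term plus $r_B(k)$, the cancellation of the linear term on the balanced slice --- is exactly the paper's $g_A$ in different notation.

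The genuine gap is precisely the step you flag as the hard part, and it is the technical heart of the paper. Your two suggestions do not supply it. The equidistribution heuristic about $(k \bmod q,\, r_B(k) \bmod q)$ would at best address $(\theta,\phi)$ near rational lattice points, not generic real $(\theta,\phi)$, and you offer no mechanism for proving that equidistribution; the Littlewood--Offord swapping idea is a plausible but entirely unworked alternative route. The paper's actual device is to examine the second differences $g_A(j)-g_A(j+m)-g_A(j+2m)+g_A(j+3m)$ over many disjoint quadruples, with the spacing $m$ tuned to the frequency $\alpha$ (your $\phi$). Conditioning on the history of the empirical CDF, a Berry--Esseen estimate (when $m$ is large) or a quantitative Poisson approximation (when $m=1$, for $|\alpha|$ bounded below) shows that $\alpha$ times each such second difference has probability bounded below of lying at distance at least a constant multiple of $|\alpha|\sqrt{m}$ from the nearest integer; a bounded-difference submartingale inequality (Azuma with a patch for rare bad conditioning events) then upgrades this to a simultaneous statement for a constant fraction of quadruples, giving $|\hat f(\alpha,\beta)|\le 1-\delta$ uniformly in $\beta$. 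This is the ingredient your sketch lacks; the rest of your outline is sound and aligns with the paper.
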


\begin{theorem} \label{transitive-thm}
Let $n$ be a positive integer and let $A$, $B$ and $C$ be independent
random $n$-sided dice in the balanced sequences model. Then the probability
that $A$ beats $C$ given that $A$ beats $B$ and $B$ beats $C$
is $\frac{1}{2}+o(1)$.
\end{theorem}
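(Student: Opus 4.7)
The plan is to recast the ``beats'' relation as a linear statistic on the coordinates of a die and then to prove a bivariate central limit theorem showing that two such statistics are asymptotically independent. For a die $B=(b_1,\ldots,b_n)$, set $h_B(k)=|\{j:b_j<k\}|-|\{j:b_j>k\}|$; then $A$ beats $B$ if and only if $\sum_{i=1}^n h_B(a_i)>0$. The balance condition $\sum_j b_j=n(n+1)/2$ forces $\sum_k h_B(k)=0$, so $h_B$ is mean zero on $\{1,\ldots,n\}$; hence $\E\sum_i h_B(a_i)=0$ already when the $a_i$ are i.i.d.\ uniform, and this survives conditioning on $\sum_i a_i=n(n+1)/2$.

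The crux is to understand, for fixed dice $B$ and $C$, the joint distribution of $X=\sum_i h_B(a_i)$ and $Y=\sum_i h_C(a_i)$ when $A$ is a random balanced sequence. I would aim to show that for a $1-o(1)$ fraction of pairs $(B,C)$ this pair is approximately bivariate Gaussian with vanishing normalised covariance. Marginal Gaussianity can be attacked by a Fourier / local-limit argument, treating the balance condition as a single extra linear constraint on i.i.d.\ uniform coordinates. The off-diagonal entry of the covariance matrix is essentially $\sum_k h_B(k)h_C(k)$ (up to a lower-order correction coming from the balance constraint), while the diagonal entries are of order $n^2$; so one needs to prove that for independent random dice $B,C$ one typically has $\sum_k h_B(k)h_C(k)=o(n^2)$. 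Heuristically $h_B$ behaves like a rescaled discrete Brownian bridge subject to one extra linear constraint, and two such independent bridges are nearly orthogonal; making this rigorous will require a second-moment computation for $\sum_k h_B(k)h_C(k)$ together with a concentration estimate for $\|h_B\|_2^2$.

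Granting these inputs, $\P_A(A\text{ beats }B \text{ and } A\text{ beats }C \mid B,C)$ is the probability that a bivariate Gaussian with correlation $o(1)$ lies in the positive quadrant, hence $\tfrac14+o(1)$ for typical $B,C$. The numerator $\P[A\text{ beats }B,\,A\text{ beats }C,\,B\text{ beats }C]$ is then $(\tfrac14+o(1))\P[B\text{ beats }C]=\tfrac18+o(1)$, where we use Theorem~\ref{ties-thm} and the symmetry $B\leftrightarrow C$ to give $\P[B\text{ beats }C]=\tfrac12+o(1)$. For the denominator $\P[A\text{ beats }B,\,B\text{ beats }C]$, the same marginal CLT yields, for typical $B$, that both $\P_A(A\text{ beats }B\mid B)$ and $\P_C(B\text{ beats }C\mid B)$ are $\tfrac12+o(1)$; since $A$ and $C$ are conditionally independent given $B$, the denominator is $\tfrac14+o(1)$, and the ratio is $\tfrac12+o(1)$.

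The main obstacle is the quantitative bivariate local central limit theorem for $(X,Y)$ conditional on $\sum_i a_i=n(n+1)/2$, coupled with the structural statement that $h_B$ resembles a generic Brownian-bridge-type function for a $1-o(1)$ fraction of dice $B$. Controlling the Gaussian error uniformly in $B,C$, and excising the exceptional set of $(B,C)$ on which $h_B,h_C$ are nearly parallel or one of $\|h_B\|_2,\|h_C\|_2$ is unusually small, will be the technical heart of the proof; everything downstream is a short deduction.
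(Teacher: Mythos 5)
Your reformulation of ``beats'' as a linear statistic and the plan to prove a local central limit theorem by Fourier methods both match the paper's strategy (there $g_A(j)=f_A(j)-j+\tfrac12$ plays the role of your $h_B$, up to an affine change). However, the central bivariate claim on which your numerator rests is false. You assert that for a $1-o(1)$ fraction of pairs $(B,C)$ the normalised covariance of $X=\sum_i h_B(a_i)$ and $Y=\sum_i h_C(a_i)$ vanishes, so that $\P_A[A\text{ beats }B\text{ and }A\text{ beats }C\mid B,C]=\tfrac14+o(1)$ for typical $(B,C)$. But $h_B$ and $h_C$ are, after rescaling by $\sqrt n$, two independent Brownian-bridge-type paths on $\{1,\dots,n\}$, and the inner product of two independent Brownian bridges is a nondegenerate $O(1)$ random variable; translating back, $\sum_k h_B(k)h_C(k)$ is typically of order $n^2$, which is the \emph{same} order as $\|h_B\|_2^2$ and $\|h_C\|_2^2$. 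Hence the correlation of $(X,Y)$ is of order $1$, not $o(1)$, and the conditional positive-quadrant probability is a genuinely random quantity of the form $\tfrac14+\tfrac{\arcsin\rho(B,C)}{2\pi}$ with $\rho$ not concentrated near zero. This is exactly the phenomenon that makes Conjecture~\ref{quasirandom} fail (see the discussion of \cite{fourdice} in Section~1.1); your route would, if it worked, prove the co-degree version of quasirandomness, which is strictly stronger than Theorem~\ref{transitive-thm} and known not to hold in the closely related continuous model. Your own proposed second-moment computation for $\sum_k h_B(k)h_C(k)$ would in fact reveal this obstruction.

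The paper avoids the bivariate question entirely via Lemma~\ref{equivalent}: for a near-tournament, ``a random triple is intransitive with probability $\tfrac14+o(1)$'' is shown, by a short double-counting identity involving $\sum_y d_+(y)d_-(y)$, to be \emph{equivalent} to ``a random vertex has out-degree $(\tfrac12+o(1))n$ with probability $1-o(1)$.'' This reduces the three-dice statement to the purely univariate Claim~\ref{main}, which needs only a two-dimensional local CLT for $(g_A(j),\,j-(n+1)/2)$ -- the second coordinate encoding the balance constraint, not a second die. It is worth noting that your numerator computation can be rescued without the false quasirandomness claim: by the $B\leftrightarrow C$ symmetry and the near-absence of ties, $\P[A\text{ beats }B,\,A\text{ beats }C,\,B\text{ beats }C]=\tfrac12\P[A\text{ beats both}]+o(1)=\tfrac12\E_A[p_A^2]+o(1)$, and concentration of $p_A=\P_B[A\text{ beats }B]$ near $\tfrac12$ is exactly what Claim~\ref{main} supplies. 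But that rescue is, in disguise, the paper's Lemma~\ref{equivalent}, and it is the ingredient your proposal is missing.
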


The method of proof can be summarized as follows. We begin by showing that Theorem~\ref{transitive-thm}
is equivalent to the statement that almost every random die beats approximately half the other dice and is beaten by approximately half the other dice (and therefore ties with almost no dice). We then argue that unless a die $A$ has a very ``atypical" distribution, then it is indeed the case that it beats approximately half the other dice and is beaten by approximately half the other dice. We can regard this last statement as the claim that if $(b_1,\dots,b_n)$ is a random sequence of elements of $\{1,2,\dots,n\}$, then the probability that it is beaten by $A$ given that it sums to $n(n+1)/2$ is approximately 1/2, as is the probability that it beats $A$ given the same condition. It turns out that this is true provided that a certain sum of independent random variables with values in $\Z$ is sufficiently close to a discrete Gaussian distribution. In order to prove this, we need a rather explicit quantitative local central limit theorem, which we prove by standard Fourier-analytic means, using probabilistic arguments to prove that the behaviour we need the Fourier transform (or characteristic function) to satisfy holds for almost all dice $A$.

This paper is the result of an open online collaboration between several authors. A complete record of the discussion that led to its existence can be found in a series of seven consecutive blog posts and comments on them, of which the first is \url{https://gowers.wordpress.com/2017/04/28/a-potential-new-polymath-project-intransitive-dice/}. The posts belong to a category entitled polymath13.

\subsection{Subsequent work} A work \cite{fourdice} subsequent to the initial
draft of this paper has disproved
Conjecture~\ref{quasirandom} in the balanced sequences model where the faces
are uniform real numbers in $[0,1]$ (or, equivalently, in $[0,n]$). A couple of
arguments from \cite{fourdice} have been incorporated into the proofs
in this paper.


\section{The preliminary reduction}\label{reduction}

We begin with a lemma about tournaments, or rather about near tournaments, by which we mean subgraphs of tournaments with $n$ vertices and $(1-o(1))\binom n2$ edges. Given a triple of vertices $(x,y,z)$, we shall call it \emph{intransitive} if the subgraph induced by the three vertices is a directed cycle of length 3, and \emph{transitive} if it is a triangle but not a directed 3-cycle. The \emph{out-degree} $d_+(x)$ of a vertex $x$ is the number of vertices $y$ such that $(x,y)$ is an edge, and the \emph{in-degree} $d_-(x)$ is the number of vertices $y$ such that $(y,x)$ is an edge.

\begin{lemma} \label{equivalent}
Let $T$ be a subgraph of a tournament with $n$ vertices and $(1-o(1))\binom n2$ edges. Then the following two statements are equivalent.
\begin{enumerate}
\item The probability that a random triple of vertices is intransitive is $\frac 14+o(1)$.
\item If $x$ is a random vertex, then $d_+(x)=(\frac 12+o(1))n$ with probability $1-o(1)$.
\end{enumerate}
\end{lemma}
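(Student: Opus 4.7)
The plan is to count the transitive triples of $T$ in two ways and to show that both counts equal $\tfrac{n^3}{8}+o(n^3)$ exactly when the corresponding condition in the lemma holds.

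First I would establish the identity
\[
\#\{\text{transitive triples in } T\} = \sum_x \binom{d_+(x)}{2} + o(n^3).
\]
In a complete tournament every transitive triple has a unique ``source'' vertex---the one whose two triple-edges point outward---so the number of transitive triples is exactly $\sum_x\binom{d_+(x)}{2}$. In our near-tournament a triple is miscounted only if it contains one of the $o(n^2)$ non-edges, and each non-edge lies in $n-2$ triples, so the error is indeed $o(n^3)$. The same reasoning shows that the total number of fully-oriented triples is $\binom{n}{3}-o(n^3)$. Classifying each such triple as transitive or intransitive, condition (1) becomes equivalent to
\[
\sum_x \binom{d_+(x)}{2} = \tfrac{3}{4}\binom{n}{3}+o(n^3) = \tfrac{n^3}{8}+o(n^3).
\]

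Next I would use that $\sum_x d_+(x)$ equals the number of edges, which is $(1-o(1))\binom{n}{2}$, so that the mean out-degree is $n/2-o(n)$. Expanding the square gives
\[
\sum_x \binom{d_+(x)}{2} = \tfrac12 \sum_x \bigl(d_+(x)-\tfrac{n}{2}\bigr)^2 + \tfrac{n^3}{8} + o(n^3),
\]
so the displayed form of (1) translates into the single statement $\sum_x\bigl(d_+(x)-\tfrac{n}{2}\bigr)^2 = o(n^3)$. By Markov's inequality this second-moment bound implies that for any fixed $\epsilon>0$ the number of vertices with $|d_+(x)-\tfrac{n}{2}|>\epsilon n$ is $o(n)$, giving (2); conversely, if (2) holds then the $o(n)$ exceptional vertices contribute at most $o(n)\cdot n^2 = o(n^3)$ while the typical vertices contribute $o(n^2)$ each, again totalling $o(n^3)$.

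The argument is essentially a counting and convexity computation and I do not expect a genuine obstacle. The only care needed is to pass between the quantitative second-moment bound and the qualitative form in (2), which is standard and can be handled by letting the threshold $\epsilon$ tend to zero sufficiently slowly with $n$.
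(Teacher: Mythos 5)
Your argument is correct and closely parallels the paper's, differing only in the choice of double-count. The paper counts ordered directed paths $x\to y\to z$ grouped by the middle vertex, giving $\sum_y d_+(y)d_-(y) = (1-o(1))\binom{n}{3} + 2\cdot(\text{number of directed 3-cycles})$, and then relates the intransitive-triple count directly to $\sum_y d_+(y)^2$. You instead count transitive triples by their unique source vertex, giving $\sum_x\binom{d_+(x)}{2}$, and expand around $d_+(x)=n/2$ to isolate $\sum_x\bigl(d_+(x)-\tfrac n2\bigr)^2$. Both decompositions reduce condition (1) to the same second-moment bound $\sum_x\bigl(d_+(x)-\tfrac n2\bigr)^2 = o(n^3)$, which is then equivalent to (2) by Chebyshev/Markov with a slowly decaying threshold, exactly as in the paper. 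The only thing each approach ``buys'' over the other is presentational: your source-vertex count puts the convex function $\binom{d_+(x)}{2}$ directly in view, so the extremal role of the near-regular degree sequence is visually immediate, whereas the paper's middle-vertex count connects directly to the directed-3-cycle count without first passing to transitive triples. Both are valid, and the error estimates you sketch for the $o(n^2)$ missing edges are exactly what is needed.
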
 

We shall apply this lemma to the tournament of three randomly chosen dice.
Note that, assuming Theorem~\ref{ties-thm}, if the first statement
of the lemma holds for this tournament, then Theorem~\ref{transitive-thm} follows.

\begin{proof}
Write $x\to y$ if $(x,y)$ is an edge of $T$. First let us count the number of triples $(x,y,z)$ such that $x\to y\to z$. A directed triangle $xyz$ in $T$ gives rise to three such triples, namely $(x,y,z), (y,z,x)$ and $(z,x,y)$. Any other triangle gives rise to just one: for example, if $x\to y$, $x\to z$ and $y\to z$, then the only triple we obtain is $(x,y,z)$. Since the number of triangles is $(1-o(1))\binom n3$, we find that the number of triples $(x,y,z)$ such that $x\to y\to z$ is $(1-o(1))\binom n3$ plus twice the number of directed triangles. Note that $\binom n3=(1+o(1))n^3/6$.

But the number of such triples is also $\sum_yd_+(y)d_-(y)$. Since the number of edges is $(1-o(1))\binom n2$, this is equal to $\left(\sum_yd_+(y)(n-d_+(y))\right)+o(n^3)$. Also, $\sum_yd_+(y)=(1-o(1))\binom n2=(1-o(1))n^2/2$, so this is $n^3/2+o(n^3)-\sum_yd_+(y)^2$. Therefore, twice the number of directed triangles is $n^3/3+o(n^3)-\sum_yd_+(y)^2$.

If a random triple of vertices has a probability $\frac 14+o(1)$ of being intransitive, then twice the number of directed triangles is also $(\frac 12+o(1))\binom n3=(\frac 1{12}+o(1))n^3$. It follows that $\sum_yd_+(y)^2=(\frac 14+o(1))n^3$, and therefore that $\E_yd_+(y)^2=(\frac 14+o(1))n^2$. But $\E_yd_+(y)=(\frac 12+o(1))n$, so $\var(d_+(y))=o(n^2)$, which implies that $d_+(y)=(\frac 12+o(1))n$ with probability $1-o(1)$.

The steps in the previous paragraph can also be reversed, so the lemma is proved.
\end{proof}

\section{
\texorpdfstring{A random variable related to an $n$-sided die and a second reduction}
{A random variable related to an n-sided die and a second reduction}
} \label{rvs}

Write $[n]$ for the set $\{1,2,\dots,n\}$. Given an $n$-sided die $A=(a_1,\dots,a_n)$ (in fact the definition we are about to give applies to any sequence in $[n]^n$) we define a cumulative distribution function $f_A$ by 
\[f_A(j)=|\{i\in[n]:a_i<j\}|+\frac 12|\{i\in[n]:a_i=j\}|.\]
We typically expect $f_A(j)$ to be around $j-\frac 12$, so it is convenient also to define a function $g_A$ by $g_A(j)=f_A(j)-j+\frac 12$. For a fixed $A$, we shall be interested in the random variable $(g_A(j), j-(n+1)/2)$, which is defined on $[n]$. More precisely, we choose $j$ uniformly from $[n]$ and evaluate the pair $(g_A(j),j-(n+1)/2)$. 

To see why this is useful to look at, let us do a few simple calculations. 

First of all, 
\begin{align*}\sum_jf_A(j)&=\sum_j\sum_i\Bigl(\b1_{[a_i<j]}+\frac 12\b1_{[a_i=j]}\Bigr)\\
&=\sum_i\sum_j\Bigl(\b1_{[a_i<j]}+\frac 12\b1_{[a_i=j]}\Bigr)\\
&=\sum_i(n-a_i+1/2)\\
&=n^2/2,
\end{align*}
where the last equality follows from the fact that $A$ is an $n$-sided die and therefore $\sum_ia_i=n(n+1)/2$. This gives us that 
\[\sum_jg_A(j)=n^2/2-\sum_j(j-1/2)=n^2/2-n(n+1)/2+n/2=0,\] 
and therefore that the mean of the random variable $(g_A(j),j-(n+1)/2)$ is $(0,0)$.

Next, let $B=(b_1,\dots,b_n)$ be another $n$-sided die. Then 
\[\sum_jf_A(b_j)=\sum_j\sum_i\Bigl(\b1_{[a_i<b_j]}+\frac 12\b1_{[a_i=b_j]}\Bigr)=\bigl|\{(i,j):a_i<b_j\}\bigr|+\frac 12\bigl|\{(i,j):a_i=b_j\}\bigr|.\]
But 
\[\sum_jg_A(b_j)=\sum_j(f_A(b_j)-b_j+1/2)=\sum_jf_A(b_j)-n^2/2,\]
where the last inequality follows from the fact that $\sum_jb_j=n(n+1)/2$. It follows that
\[\sum_jg_A(b_j)=\bigl|\{(i,j):a_i<b_j\}\bigr|+\frac 12\bigl|\{(i,j):a_i=b_j\}\bigr|-n^2/2.\]
Since there are $n^2$ pairs $(i,j)$, this tells us that $\sum_jg_A(b_j)>0$ if and only if
\[\bigl|\{(i,j):a_i<b_j\}\bigr|+\frac 12\bigl|\{(i,j):a_i=b_j\}\bigr|>\bigl|\{(i,j):a_i>b_j\}\bigr|+\frac 12\bigl|\{(i,j):a_i=b_j\}\bigr|,\]
which is true if and only if $B$ beats $A$. Similarly $A$ beats $B$ if and only if $\sum_jg_A(b_j)<0$.

We will therefore be done with Theorems~\ref{ties-thm} and~\ref{transitive-thm} if we can prove the following claim.

\begin{claim}\label{main}
If $A$ is a random $n$-sided die, then with probability $1-o(1)$ we have that the proportion of $n$-sided dice $B=(b_1,\dots,b_n)$ with $\sum_jg_A(b_j)>0$ is $\frac 12+o(1)$.
\end{claim}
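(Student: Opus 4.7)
The plan is to interpret Claim~\ref{main} probabilistically. Let $X_1,\dots,X_n$ be i.i.d.\ uniform on $[n]$, and for a fixed die $A$ consider the lattice-valued random vectors
\[
Y_j := \bigl(g_A(X_j),\, X_j - (n+1)/2\bigr),
\]
which, by the computations just performed, have mean $(0,0)$. A balanced sequence $B$ is exactly $(X_1,\dots,X_n)$ conditioned on $\sum_j(X_j - (n+1)/2)=0$, so Claim~\ref{main} becomes the assertion that, for all but a $o(1)$-fraction of dice $A$,
\[
\P\bigl( T>0 \,\big|\, S=0 \bigr) = \tfrac12 + o(1),
\qquad \text{where } (T,S) := \sum_j Y_j.
\]
My strategy is to prove a quantitative local central limit theorem (LCLT) for the lattice walk $(T,S)$, asserting that the joint mass at each lattice point near $(0,0)$ is well approximated by a bivariate Gaussian density with mean $(0,0)$ and covariance matrix $n\Sigma_A$ for some $A$-dependent $\Sigma_A$. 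Granting this, the conditional distribution of $T$ given $S=0$ is approximately a centred univariate Gaussian, so $\P(T>0\mid S=0)=\tfrac12+o(1)$ is immediate by symmetry, and the tie bound $\P(T=0\mid S=0)=O(1/\sqrt n)=o(1)$ that accounts for Theorem~\ref{ties-thm} drops out as well.

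The LCLT itself I would establish by standard Fourier inversion applied to the single-step characteristic function
\[
\phi_A(\xi,\eta) := \E\exp\!\bigl(2\pi i\bigl(\xi\, g_A(X) + \eta(X - (n+1)/2)\bigr)\bigr),
\]
raised to the $n$-th power and integrated over the dual $2$-torus. The desired Gaussian approximation reduces to two properties of $\phi_A$: (a) a quadratic expansion $\phi_A(\xi,\eta) = 1 - 2\pi^2\langle (\xi,\eta), \Sigma_A (\xi,\eta)\rangle + O(\text{cubic})$ near the origin, with $\Sigma_A$ positive definite and of order $1$; and (b) a uniform bound $|\phi_A(\xi,\eta)| \leq 1 - c$ for some absolute $c>0$ outside a small neighbourhood of the origin on the dual torus. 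Combining (a) and (b) in the usual way reduces the Fourier integral to a Gaussian one with error of smaller order than the $\sim 1/n$ scale of an LCLT at lattice spacing $1$, which is exactly what is needed.

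Everything therefore reduces to verifying (a) and (b) for almost every $A$. Condition (a) is a second-moment statement about $A$: with probability $1-o(1)$ over the random die, $\tfrac1n\sum_j g_A(j)^2$ is bounded below and the empirical correlation between $g_A(j)$ and $j$ on $[n]$ is bounded away from $\pm 1$. These estimates follow from direct computations once one observes that $g_A(j)$ is essentially a centred sum of indicator variables recording how the faces of $A$ lie relative to $j$. The main obstacle is condition (b): one must show that for typical $A$ there is no non-trivial frequency $(\xi,\eta)$ on the dual torus at which $\xi g_A(X)+\eta(X-(n+1)/2)$ concentrates near $\Z$ as $X$ ranges over $[n]$. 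The regime where $\eta$ dominates is elementary; the regime where $\xi$ dominates reduces to an anticoncentration statement for the sequence $(g_A(j))_{j\in[n]}$ modulo $1/\xi$; and the mixed regime is the delicate one, demanding a quantitative arithmetic regularity of $g_A$ that must hold with high probability over the random die. I expect this characteristic-function estimate away from the origin to be the bulk of the technical work and the genuine obstacle, since it is here that the probabilistic structure of a random $n$-sided die must be exploited in detail.
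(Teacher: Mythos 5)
Your plan is the paper's: reduce Claim~\ref{main} to $\P[T>0\mid S=0]=\tfrac12+o(1)$ for the lattice walk $(T,S)=\sum_j(g_A(X_j),X_j-(n+1)/2)$, prove a quantitative local CLT for it via Fourier inversion of the single-step characteristic function $\phi_A$, and treat the bound on $|\phi_A|$ away from the origin as the main technical burden. That is precisely the structure of Lemma~\ref{taylor}, Lemma~\ref{fcbound}, Corollary~\ref{smalloutsidebox}, Lemma~\ref{gaussianapproximation} and Corollary~\ref{discretegaussian}.

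Two caveats worth flagging. Your condition (b) as stated, a uniform $|\phi_A|\le 1-c$ with absolute $c$ outside a ``small neighbourhood,'' does not on its own close the argument: the Taylor expansion in (a) is only valid at scale $|\xi|\lesssim\log n/n$, $|\eta|\lesssim(\log n/n)^{3/2}$ (because $\|U\|_\infty\sim\sqrt{n\log n}$, $\|V\|_\infty\sim n$), while an absolute-constant bound on $|\phi_A|$ only kicks in at scale $\Omega(1)$, leaving a wide intermediate regime uncovered. The paper therefore proves a graded bound, Lemma~\ref{fcbound}, of the form $|\phi_A|\le 1-\min(10^{-13},10^{-11}\xi^2n/\log n)$, which degrades as $\xi\to 0$ but is still strong enough that $|\phi_A|^n\le n^{-10}$ everywhere outside the small box $B$. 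Secondly, your condition (a) asks for explicit nondegeneracy of $\Sigma_A$ (lower bounds on $\tfrac1n\sum_j g_A(j)^2$ and on the correlation being away from $\pm1$); the paper never proves this and does not need it. It obtains $\P[(T,S)=(0,0)]=O((\log n/n)^{5/2})$ directly from the small area of $B$ rather than from any spectral lower bound on $\Sigma_A$, and its final step uses only the evenness of the discrete Gaussian together with the $A$-independent lower bound $\P[S=0]\ge n^{-3/2}/4$ from Lemma~\ref{probzero}. If you follow your variant you will need a separate lemma proving those moment lower bounds for a random balanced die; this is plausibly true but not an immediate computation once one conditions on $\sum_i a_i=n(n+1)/2$, and the paper's arrangement sidesteps the issue entirely.
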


The proof that this claim is sufficient requires one small observation. Given a die $A=(a_1,\dots,a_n)$, define the \emph{complementary die} $\ol A$ to be the sequence $(n+1-a_1,\dots,n+1-a_n)$. Then $A$ beats $B$ if and only if $\ol B$ beats $\ol A$. So if the claim is true, then with probability $1-o(1)$, the proportion of $B$ such that $B$ beats $A$ is $\frac 12+o(1)$ and the proportion of $B$ such that $\ol B$ beats $\ol A$ is also $\frac 12+o(1)$, which implies that the proportion of $B$ such that $A$ ties with $B$ is $o(1)$. 

\section{A heuristic argument for Claim \ref{main}}

We begin by explaining why one would expect Claim \ref{main} to be true. Once we have done that, we shall turn our heuristic argument into a rigorous one. It is at that point that we shall need to prove a local central limit theorem with sufficiently explicit bounds.

Let $(b_1,\dots,b_n)$ be a purely random sequence belonging to $[n]^n$ -- that is, one where the $b_i$ are chosen uniformly and independently from $[n]$ and there is no restriction on the sum. Then to prove Claim \ref{main} for a fixed $A$ we need to show that
\[\P\Bigl[\sum_jg_A(b_j)>0\Bigm|\sum_jb_j=n(n+1)/2\Bigr]=\frac 12+o(1),\]
which is equivalent to the assertion that
\[\P\Bigl[\sum_jg_A(b_j)>0\Bigm|\sum_j(b_j-(n+1)/2)=0\Bigr]=\frac 12+o(1).\]
But $b_1,\dots,b_n$ are uniformly and independently chosen from $[n]$. Thus, if we write $(X_j,Y_j)$ for the random variable $(g_A(b_j),b_j-(n+1)/2)$, then $(X_1,Y_1),\dots,(X_n,Y_n)$ are $n$ independent copies of the random variable $(g_A(j),j-(n+1)/2)$ mentioned earlier, and we are concerned with the sum $\sum_{j=1}^n(X_j,Y_j)$, which we shall write as $(X,Y)$. 

The central limit theorem suggests that the distribution of this sum will be approximately Gaussian, and since each $(X_i,Y_i)$ has mean $(0,0)$ we would in particular expect that the distribution would be approximately symmetric about the origin. Also, we would expect a typical value of $g_A(j)$ to have magnitude around $\sqrt n$, so the standard deviation of $X$ ought to be around $n$. Also $Y$ has standard deviation of order $n^{3/2}$ and the two random variables, though correlated, will probably not be too heavily correlated.

If all these heuristics are correct, then the probability that $X=0$ given that $Y=0$ should be of order $n^{-1}$, and certainly $o(1)$. The symmetry should imply that $\P[X>0|Y=0]\approx\P[X<0|Y=0]$, and these statements taken together would give us that $\P[X>0|Y=0]=\frac 12+o(1)$ and $\P[X<0|Y=0]=\frac 12+o(1)$, which is equivalent, as we have seen, to the statement that the proportion of dice that beat $A$ is $\frac 12+o(1)$ and the proportion of dice that $A$ beats is $\frac 12+o(1)$. 

The reason this heuristic argument cannot immediately be turned into a proof is that the central limit theorem is too blunt a tool. There are two reasons for this. The first is that although it tells us that a sum of i.i.d.~random variables will converge to a Gaussian, it does not tell us how fast that convergence will occur, and we need it to have occurred (to within a small error) when we take a sum of $n$ copies of $(g_A(j),j)$. And we cannot just let $n$ tend to infinity because the random variables themselves depend on $n$. This second problem applies not just to the central limit theorem but also to the Berry--Esseen theorem, which gives a rate of convergence in the central limit theorem, but with a constant that (necessarily) depends on the random variable.

A second problem is that the notion of convergence in the central limit theorem and the Berry--Esseen theorem is not suitable for our purposes. We need to be able to estimate the probability that $(X,Y)$ belongs to the positive x-axis, which is a ``probability zero event" from the point of view of the central limit theorem and Berry--Esseen theorem. Instead, we need a \emph{local central limit theorem}, the name given to versions of the central limit theorem that can give us estimates for the density function at individual values. Unfortunately, the local central limit theorems that appear in the literature tend still to involve inexplicit constants that depend on the random variable, again necessarily. (We did find an exception to this in~\cite{LCLT}, but it proved a one-dimensional theorem where we need a two-dimensional one.) 

In the end, we have proved for ourselves a local central limit theorem that is tailored to our application. It is not hard to prove using Fourier analysis, which is one of the standard methods for proving such results, but it requires the random variable to have certain properties, as we shall explain later, in order for us to be able to make the implied constant explicit. So the rest of the proof splits into two parts: first we shall prove that the random variable $(U,V)=(g_A(j),j-(n+1)/2)$ has certain properties with high probability (when $A$ is a random $n$-sided die). Then we shall use those properties to establish a suitable local central limit theorem, after which the argument will essentially be finished. 

\section{
\texorpdfstring{Properties of the random variable $(U,V)$}
{Properties of the random variable (U,V)}
}

From now on we shall write $(U,V)$ for an instance of the random variable
$(g_A(j),j)$ for a fixed die $A$.
In this section we shall obtain an upper bound for the essential supremum of $U$
and an upper bound on the size of the characteristic function of $(U,V)$. All these bounds will hold with probability $1-o(1)$ when $A$ is a random $n$-sided die in the balanced sequences model.

\subsection{
\texorpdfstring{An upper bound for $\|U\|_\infty$.}
{An upper bound for U\textunderscore infinity norm.}
}

We begin with an almost standard fact (Lemma \ref{probzero} below), but for convenience we provide a complete proof. (The fact and its proof could be thought of as a weakening of a very special case of a one-dimensional local central limit theorem.) First we prove an even more basic lemma.

\begin{lemma}
Let $I_n$ be the set $\{-(n-1)/2,-(n-3)/2,\dots,(n-3)/2,(n-1)/2\}$ and let $f$ be defined on $\frac 12\Z$ by taking $f(x)=n^{-1}$ if $x\in I_n$ and $f(x)=0$ otherwise. (Thus, $f(x)=\P[V=x]$.) Then the $k$-fold convolution $f^{*k}$ of $f$ is supported on $\Z$ except if $k$ is odd and $n$ is even, in which case it is supported on $\Z+\frac 12$. In all cases, $f^{*k}$ is an even function, and its non-zero values increase when $x<0$ and decrease when $x>0$.
\end{lemma}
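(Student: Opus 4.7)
My plan is to handle the three assertions separately, with the first two being essentially immediate and the unimodality claim reduced to an induction on $k$.

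For the support and parity assertion, the observation is arithmetic: every element of $I_n$ lies in $\Z$ when $n$ is odd and in $\Z+\tfrac12$ when $n$ is even, so a sum of $k$ such elements lies in $\Z$ unless $n$ is even and $k$ is odd, in which case it lies in $\Z+\tfrac12$. Since $f^{*k}(x)$ is $n^{-k}$ times the number of ways to write $x$ as an ordered sum of $k$ elements of $I_n$, this pins down the support. Evenness of $f^{*k}$ is immediate: $f$ is even because $I_n$ is symmetric about $0$, and convolution preserves evenness.

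For unimodality I would induct on $k$. The base case $k=1$ is trivial since $f$ is constant on its support. For the inductive step I would write
\[f^{*(k+1)}(x)=(f*f^{*k})(x)=\frac{1}{n}\sum_{z=x-(n-1)/2}^{x+(n-1)/2}f^{*k}(z),\]
recognizing the right-hand side as a moving average of $f^{*k}$ over a window of $n$ consecutive points of the appropriate lattice ($\Z$ or $\Z+\tfrac12$). Telescoping this identity between $x$ and $x+1$ yields
\[f^{*(k+1)}(x+1)-f^{*(k+1)}(x)=\frac{1}{n}\bigl[f^{*k}(x+(n+1)/2)-f^{*k}(x-(n-1)/2)\bigr].\]
A short case check shows $|x+(n+1)/2|\geq|x-(n-1)/2|$ for every $x\geq0$ (the only nontrivial case, $0\leq x<(n-1)/2$, reduces to $2x+1\geq0$). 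By the induction hypothesis $f^{*k}$ is non-increasing in $|x|$, so the right-hand side is non-positive, giving the decreasing property of $f^{*(k+1)}$ for $x\geq0$; the increasing property for $x\leq0$ then follows from evenness.

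The main, and rather mild, obstacle is bookkeeping in the half-integer case when $n$ is even and $k$ is odd: one must check that the moving-average identity, the telescoping, and the magnitude comparison all live in the correct lattice. This works out because the support of $f^{*k}$ always has spacing $1$, so shifting $x$ to $x+1$ stays within a single lattice and the argument goes through verbatim.
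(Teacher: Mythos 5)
Your proof is correct and takes essentially the same approach as the paper's: both dispose of the support and parity claims immediately and establish unimodality by induction on $k$, observing that the convolution against the uniform window on $I_n$ changes from $x$ to $x+1$ by exactly the two boundary terms $f^{*k}(x-(n-1)/2)$ and $f^{*k}(x+(n+1)/2)$, and comparing those via evenness and monotonicity. Your write-up merely makes the telescoping identity and the inequality $|x+(n+1)/2|\ge|x-(n-1)/2|$ for $x\ge 0$ a little more explicit than the paper does.
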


\begin{proof}
The statements about the support and the symmetry are trivial. To prove the increasing and decreasing properties, we note that they follow easily by induction. Indeed, let $g$ be any even function supported on an arithmetic progression of common difference 1 that increases towards the middle, and let $x\geq 0$. Then the inner product of $g$ with $I_n+x$ is greater than or equal to the inner product of $g$ with $I_n+x+1$, since $g(x-(n-1)/2)\geq g(x+1+(n-1)/2)$. Therefore, $g*f$ decreases when $x$ is positive, and by symmetry it increases when $x$ is negative (when we restrict to appropriate supports).  
\end{proof}

What we care about here is that when $k=n$, the maximum of $f^{*n}$ is attained at zero. And all we really need from the next lemma is that the probability that $Y=0$ is not tiny.

\begin{lemma} \label{probzero}
Let $(a_1,\dots,a_n)$ be an element of $[n]^n$ chosen uniformly at random. Then the probability that $\sum_ia_i=n(n+1)/2$ is at least $n^{-3/2}/4$.
\end{lemma}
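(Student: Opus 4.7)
The plan is to combine Chebyshev's inequality with the monotonicity and symmetry of $f^{*n}$ established in the preceding lemma.

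First I would rewrite the event $\sum_i a_i = n(n+1)/2$ in the shifted coordinates $V_i = a_i - (n+1)/2$, so that the probability we want to bound from below is precisely $f^{*n}(0)$, where $f$ is the uniform distribution on $I_n$ from the previous lemma. Note that with $k = n$, the ``exceptional'' case ($k$ odd and $n$ even) cannot occur, so $f^{*n}$ is supported on $\Z$ regardless of the parity of $n$.

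Next I would localise the mass. A single $V_i$ is uniform on $I_n$ and so has variance $(n^2-1)/12 < n^2/12$; hence $\var(\sum_i V_i) < n^3/12$. Choosing $R = \lceil n^{3/2}/\sqrt 6\rceil$, Chebyshev's inequality gives
\[
\P\Bigl[\bigl|\textstyle\sum_i V_i\bigr| > R\Bigr] \;\leq\; \frac{n^3/12}{R^2} \;\leq\; \frac{1}{2},
\]
so at least half of the mass of $f^{*n}$ is concentrated on the $2R+1$ integers in $[-R,R]$.

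By the previous lemma $f^{*n}$ is even and non-increasing in $|x|$ on its support $\Z$, so $f^{*n}(0)$ is its maximum value. Therefore the mass on $\{-R,-R+1,\dots,R\}$ is at most $(2R+1)\,f^{*n}(0)$, which combined with the previous step yields
\[
f^{*n}(0) \;\geq\; \frac{1/2}{2R+1} \;\geq\; \frac{1}{4n^{3/2}/\sqrt 6 + 6}.
\]
For all sufficiently large $n$ the right-hand side is at least $n^{-3/2}/4$ (since $4/\sqrt 6 < 4$), and the very small values of $n$ can be checked by hand.

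There is no real obstacle here: the only mild point is verifying that the constant $1/4$ survives, which is why I would pick the Chebyshev radius $R$ to be $\lceil n^{3/2}/\sqrt 6\rceil$ rather than something larger. The same two-ingredient recipe (Chebyshev to localise mass, plus ``max $\geq$ average'' from the monotonicity lemma) is exactly the weak one-dimensional local central limit theorem alluded to in the paragraph preceding the statement.
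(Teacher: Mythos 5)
Your proof is correct and uses essentially the same two-step argument as the paper: Chebyshev's inequality to concentrate the mass of $\sum_i V_i$ on an interval of length $O(n^{3/2})$, combined with the fact (from the preceding lemma) that $f^{*n}$ achieves its maximum at $0$. The paper uses the cruder variance bound $n^2/4$ and radius $n^{3/2}$, obtaining $\P[|Y|\le n^{3/2}]\ge 3/4$ and then $f^{*n}(0)\ge 3/(8(n^{3/2}+1))$, while you take the exact variance $n^2/12$ and radius $\lceil n^{3/2}/\sqrt 6\rceil$; these are only cosmetic differences in constants, not in method.
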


\begin{proof}
The probability that $\sum_ia_i=n(n+1)/2$ is $f^{*n}(0)$. Equivalently, it is the probability that $Y=0$, where $Y$ is the sum of $n$ independent copies of $V$. The variance of $V$ is at most $n^2/4$, so the variance of $Y$ is at most $n^3/4$. Therefore, by Chebyshev's inequality, the probability that $|Y|\geq n^{3/2}$ is at most $1/4$, which implies that the probability that $|Y|\leq n^{3/2}$ is at least $3/4$. Since $0$ is the most likely value of $Y$, it follows that $Y=0$ with probability at least $\frac{3}{4(2n^{3/2}+1)}\geq n^{-3/2}/4$.
\end{proof}

We shall now obtain an upper bound for $\|U\|_\infty$. Our method is to obtain an upper bound that holds with such high probability for a purely random element of $[n]^n$ that it continues to hold with high probability even when we condition on the sum being $n(n+1)/2$.

\begin{lemma} \label{maxnorm}
Let $A$ be a random $n$-sided die. Then with probability $1-8n^{-11/2}$ we have that $\max_j|g_A(j)|\leq 2\sqrt{n\log n}$.
\end{lemma}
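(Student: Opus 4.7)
The plan is to first establish the bound in the unconditional model where $A=(a_1,\dots,a_n)$ is drawn uniformly from $[n]^n$ with no sum constraint, and then transfer it to the balanced sequences model by paying the cost of conditioning on $\sum_i a_i=n(n+1)/2$, whose probability is bounded below by Lemma~\ref{probzero}. This follows the strategy the authors announce immediately before stating the lemma.

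For a fixed $j\in[n]$, I would write
\[g_A(j)=\sum_{i=1}^n\bigl(Y_{i,j}-\E Y_{i,j}\bigr),\qquad Y_{i,j}=\b1_{[a_i<j]}+\tfrac12\b1_{[a_i=j]}\in[0,1].\]
In the unconditional model the variables $Y_{1,j},\dots,Y_{n,j}$ are independent and lie in an interval of length $1$, so Hoeffding's inequality gives
\[\P\bigl[|g_A(j)|\geq 2\sqrt{n\log n}\bigr]\leq 2\exp(-8\log n)=2n^{-8}.\]
A union bound over the $n$ choices of $j$ then yields $\P\bigl[\max_j|g_A(j)|\geq 2\sqrt{n\log n}\bigr]\leq 2n^{-7}$ in the unconditional model.

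To conclude, Lemma~\ref{probzero} guarantees that the event $\{\sum_i a_i=n(n+1)/2\}$ has probability at least $n^{-3/2}/4$, so conditioning on it inflates probabilities by a factor of at most $4n^{3/2}$. Therefore in the balanced sequences model
\[\P\bigl[\max_j|g_A(j)|\geq 2\sqrt{n\log n}\bigr]\leq 4n^{3/2}\cdot 2n^{-7}=8n^{-11/2},\]
which is exactly the stated bound. There is no real obstacle: the argument is a Hoeffding tail estimate, a union bound over $j$, and a reweighting by the conditioning cost. The only point to watch is that the constant in the Hoeffding exponent is sharp enough (namely $2$, since the $Y_{i,j}$ lie in an interval of length $1$) to produce an exponent of $-8\log n$, which when combined with the $n^{3/2}$ price of conditioning and the $n$-fold union bound gives precisely the failure probability $8n^{-11/2}$.
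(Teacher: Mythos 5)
Your proof is correct and follows essentially the same strategy as the paper: a Hoeffding/Chernoff tail bound in the unconditional model, a union bound over $j$, and then paying the conditioning cost via Lemma~\ref{probzero}. The only (cosmetic) difference is that the paper applies Chernoff to the Bernoulli counts $n_A(j)=|\{i:a_i\leq j\}|$ and then uses $f_A(j)=\tfrac12(n_A(j-1)+n_A(j))$ with a triangle inequality, whereas you apply Hoeffding directly to $f_A(j)=\sum_i Y_{i,j}$ with $Y_{i,j}\in[0,1]$, which is slightly more streamlined and gives the same constants.
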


\begin{proof}
Let $(a_1,\dots,a_n)$ be a purely random sequence -- that is, an element of $[n]^n$ chosen uniformly at random. For each $j$, let $n_A(j)$ be the number of $i$ such that $a_i\leq j$. Note that $f_A(j)$ is the average of $n_A(j-1)$ and $n_A(j)$. 

Now $n_A(j)$ is a sum of $n$ independent Bernoulli random variables of mean $j/n$. By Chernoff's bounds, the probability that $|n_A(j)-j|\geq m$ is at most $2\exp(-2m^2/n)$. Therefore, the probability that there exists $j$ such that $|n_A(j)-j|\geq m$ is at most $2n\exp(-2m^2/n)$. Setting $m=2\sqrt{n\log n}$, this is at most $2n\exp(-8\log n)=2n^{-7}$. By Lemma \ref{probzero}, if we now condition on the event that $\sum_ia_i=n(n+1)/2$, then this probability rises to at most $8n^{-11/2}$.

If no such $j$ exists, then for every $j$ we have that
\[\big|(n_A(j-1)+n_A(j))/2 - (j-1+j)/2\big|\leq 2\sqrt{n\log n},\]
by the triangle inequality. The left-hand side of this inequality is $|g_A(j)|$.
\end{proof}

\subsection{Tools for bounding the characteristic function}

In this section we gather a few tools that will be useful for bounding
the characteristic function of $(U,V)$. We start by stating Azuma's inequality.

\begin{lemma}
\label{deviation}
Let the random variables $Y_0,\ldots,Y_n$ form a bounded difference
submartingale 
adapted to a filtration $\mathcal{F}_0,\ldots,\mathcal{F}_n$, 
that is, the random variable $Y_i$ is measurable with respect
to the $\sigma$-algebra $\mathcal{F}_i$ for each $i$, and almost surely
$\E\left[Y_{i+1}-Y_i\mid \mathcal{F}_i\right]\ge 0$ and
$|Y_{i+1}-Y_i|\le c$. Then, for every $n$ and every $\e>0$,
\[
\P[Y_n\le Y_0-\e]\le\exp\left(
-\frac{\e^2}{2cn}
\right)\;.
\]
\end{lemma}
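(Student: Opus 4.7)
The plan is to prove this via the standard exponential-moment (Chernoff) method, adapted to the submartingale setting. Write $Z_i = Y_i - Y_{i-1}$ for the increments, so that the event $\{Y_n \le Y_0 - \e\}$ is exactly $\{\sum_{i=1}^n Z_i \le -\e\}$. For any parameter $\lambda > 0$, Markov's inequality applied to the nonnegative random variable $\exp\bigl(-\lambda \sum_i Z_i\bigr)$ gives
\[
\P\Bigl[\sum_{i=1}^n Z_i \le -\e\Bigr] \le e^{-\lambda \e}\,\E\Bigl[\exp\Bigl(-\lambda\sum_{i=1}^n Z_i\Bigr)\Bigr].
\]
The problem is thus reduced to bounding this moment generating function and optimizing $\lambda$ at the end.

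The next step is to peel off the conditional expectations one increment at a time using the tower property: write the joint exponential as $\exp\bigl(-\lambda\sum_{i=1}^{n-1} Z_i\bigr) \cdot \exp(-\lambda Z_n)$, take $\E[\,\cdot\mid \mathcal{F}_{n-1}]$ of the second factor, and iterate. The key ingredient is Hoeffding's lemma in its conditional form: if $Z$ is $\mathcal{F}_{i}$-conditionally bounded by $c$ in absolute value and satisfies $\E[Z\mid \mathcal{F}_{i-1}] = \mu \ge 0$, then
\[
\E\bigl[e^{-\lambda Z}\mid \mathcal{F}_{i-1}\bigr] \le e^{-\lambda \mu + \lambda^2 c^2/2} \le e^{\lambda^2 c^2/2},
\]
where the last inequality uses $\mu \ge 0$ --- this is the only point at which the submartingale hypothesis enters, and it is precisely what makes the argument go through without the increments having mean zero. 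I would prove the conditional Hoeffding bound by the usual convexity trick: on $[-c,c]$ the convex function $x\mapsto e^{-\lambda x}$ lies below its secant $\tfrac{c-x}{2c}e^{\lambda c} + \tfrac{c+x}{2c}e^{-\lambda c}$, so taking conditional expectations gives a bound depending only on $\mu$, which one then shows is at most $e^{-\lambda\mu + \lambda^2 c^2/2}$ by comparing Taylor expansions of the logarithm.

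Iterating the conditional bound $n$ times yields $\E\bigl[\exp(-\lambda\sum_i Z_i)\bigr] \le \exp(n\lambda^2 c^2/2)$, hence
\[
\P[Y_n \le Y_0 - \e] \le \exp\bigl(-\lambda \e + n\lambda^2 c^2/2\bigr).
\]
Minimizing the exponent in $\lambda$ by the choice $\lambda = \e/(nc^2)$ gives the claimed Gaussian tail bound.

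There is no real obstacle here --- this is textbook material, and the only genuine step of content is Hoeffding's lemma itself. The one mildly nontrivial observation to flag is that for a submartingale (rather than a martingale) one still gets the same bound because the drift term $-\lambda\mu$ in Hoeffding's inequality has a favourable sign when $\mu \ge 0$; this is precisely why the lemma is stated with the one-sided deviation $Y_n \le Y_0 - \e$ rather than $|Y_n - Y_0| \ge \e$.
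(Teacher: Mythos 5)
The paper does not prove this lemma; it is cited as a standard result (Azuma's inequality for bounded-difference submartingales), so there is no in-paper argument to compare against. Your Chernoff/Hoeffding derivation is the standard proof of Azuma's inequality and is correct in its essentials, including the observation that the submartingale hypothesis is used only to discard the favourable drift term $-\lambda\mu$.

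One point deserves correction, though. Your optimization with $\lambda = \e/(nc^2)$ yields the bound $\exp(-\e^2/(2nc^2))$, which is the genuine Azuma bound. The lemma as printed reads $\exp\bigl(-\e^2/(2cn)\bigr)$, with $c$ to the first power rather than squared. These coincide when $c = 1$ but not in general, and for $c$ large the printed form is in fact false: a $\pm c$ random walk has one-sided tail of order $\exp(-\e^2/(2nc^2))$, which for $c$ bounded away from $1$ from above eventually exceeds $\exp(-\e^2/(2cn))$. The statement in the paper should be read as a typographical slip for $c^2$; nothing downstream is affected, since the only application (Lemma~\ref{deviation2}) takes $c = 1$. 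But your closing claim that the choice $\lambda = \e/(nc^2)$ ``gives the claimed Gaussian tail bound'' glosses over this --- it gives the correct $c^2$ bound, which implies the printed bound only when $c\le 1$, and you should say so explicitly.
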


In fact, we need a small modification of Lemma \ref{deviation}. Roughly speaking, Lemma \ref{deviation} works fine except if certain events of very low probability occur, so we now prove a variant of the lemma that shows that low-probability events do not mess up the conclusion by too much.

\begin{lemma} \label{deviation2}
Let $Y_0,\ldots,Y_n$ be random variables 
adapted to a filtration $\mathcal{F}_0,\ldots,\mathcal{F}_n$
and with bounded differences $|Y_{i+1}-Y_i|\le 1$.
Furthermore, for each $i$ let $D_{i}$ denote the event
\[
\E[Y_{i}-Y_{i-1}\mid\mathcal{F}_{i-1}]< 0.
\] 
Then for every $\e>0$,
\[
\P[Y_n\le Y_0-\e]\le
\exp\left(-\frac{\e^2}{2n}\right)+\Pr\left[\bigcup_{i=1}^n D_i\right]\;.
\]
\end{lemma}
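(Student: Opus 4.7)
The plan is to reduce Lemma \ref{deviation2} to Lemma \ref{deviation} by stopping the sequence $(Y_i)$ just before any of the exceptional events $D_i$ occurs. The crucial observation is that each $D_i$ is defined in terms of the conditional expectation $\E[Y_i - Y_{i-1}\mid \mathcal{F}_{i-1}]$, and therefore lies in $\mathcal{F}_{i-1}$. Consequently the random index
\[
\tau = \min\{i \ge 1 : D_i \text{ occurs}\},
\]
with the convention $\tau = \infty$ if no such $i$ exists, satisfies $\{\tau \le i\} \in \mathcal{F}_{i-1}$, so $\tau - 1$ is an ordinary stopping time with respect to $(\mathcal{F}_i)$.

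Next I would introduce the stopped sequence $Y'_i := Y_{i \wedge (\tau - 1)}$ and check that it meets the hypotheses of Lemma \ref{deviation} with $c = 1$. Its increments are
\[
Y'_i - Y'_{i-1} = (Y_i - Y_{i-1})\,\b1[\tau \ge i+1],
\]
which are bounded by $1$ in absolute value since $|Y_i - Y_{i-1}| \le 1$. Because the indicator $\b1[\tau \ge i+1] = \b1\bigl[\bigcap_{j=1}^{i} D_j^c\bigr]$ is $\mathcal{F}_{i-1}$-measurable,
\[
\E[Y'_i - Y'_{i-1}\mid \mathcal{F}_{i-1}] = \b1[\tau \ge i+1]\,\E[Y_i - Y_{i-1}\mid \mathcal{F}_{i-1}] \ge 0,
\]
since on $\{\tau \ge i+1\}$ the event $D_i$ fails, which by definition forces $\E[Y_i - Y_{i-1}\mid \mathcal{F}_{i-1}] \ge 0$. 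Lemma \ref{deviation} applied to $(Y'_i)$ therefore gives $\P[Y'_n \le Y'_0 - \epsilon] \le \exp(-\epsilon^2/(2n))$.

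To finish, I would split on whether $\tau \le n$ or not. On the event $\{\tau > n\}$ we have $\tau - 1 \ge n$, so $Y'_n = Y_n$ and $Y'_0 = Y_0$; hence
\[
\P[Y_n \le Y_0 - \epsilon] \le \P[Y'_n \le Y'_0 - \epsilon] + \P[\tau \le n] \le \exp\Bigl(-\frac{\epsilon^2}{2n}\Bigr) + \P\Bigl[\bigcup_{i=1}^n D_i\Bigr].
\]
There is no serious obstacle in this argument; the only point that needs care is verifying that each $D_i$ belongs to $\mathcal{F}_{i-1}$, which is what makes $\tau$ a predictable stopping time and lets us freeze the process without enlarging the step size. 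A compensator-style alternative, replacing $Y_i$ by $Z_i := Y_i + \sum_{j \le i} \max(-\E[Y_j - Y_{j-1}\mid \mathcal{F}_{j-1}], 0)$, also produces a submartingale, but it doubles the difference bound and hence weakens the constant in the exponent, so the stopping-time approach is preferable.
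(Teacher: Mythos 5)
Your argument is correct and is essentially the same as the paper's: the paper constructs the modified process $Y'_i$ by the explicit recursion ``$Y'_i = Y_i$ if none of $D_1,\dots,D_i$ occur, else $Y'_i = Y'_{i-1}$'', which is exactly your stopped process $Y_{i\wedge(\tau-1)}$. Your additional observation that $D_i\in\mathcal{F}_{i-1}$ makes $\tau-1$ a stopping time is the key point that the paper leaves implicit when it asserts the stopped process is still a bounded-difference submartingale.
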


\begin{proof}
The only difference between this statement and Lemma~\ref{deviation} is that the random
variables $Y_0,\ldots,Y_n$ do not always satisfy the
submartingale property.
To fix this, let $Y'_0,\ldots,Y'_n$ be a sequence of random variables given by
$Y'_0=Y_0$ and for $i\geq 1$,
\[Y'_{i}=\begin{cases}
Y_{i}&\text{if none of events $D_1,\ldots,D_{i}$ happen,}\\
Y'_{i-1}&\text{otherwise.}
\end{cases}
\]
Keeping in mind the definitions of $D_i$ and $Y'_i$, by induction we see that $Y'_0,\ldots,Y'_n$ form a bounded difference submartingale with
respect to the filtration $\mathcal{F}_0,\ldots,\mathcal{F}_n$.
Consequently, by Lemma~\ref{deviation},
\[
\Pr[Y'_n\le Y'_0-\e]\le\exp\left(-\frac{\e^2}{2n}\right)\;. 
\]
On the other hand, if
none of $D_1,\ldots,D_{n}$ happened, then $Y'_n=Y_n$.
Hence,
\[
\Pr[Y_n\le Y_0-\e]\le\Pr[Y'_n\le Y'_0-\e]+\Pr[Y'_n\ne Y_n]
\le\exp\left(-\frac{\e^2}{2n}\right)+
\Pr\left[\bigcup_{i=1}^{n} D_i\right]\;.
\qedhere\]
\end{proof}

We shall also need to show that certain sums of independent random variables lie in certain ranges with probability bounded away from zero. The Berry--Esseen theorem is sufficient for this. (More elementary approaches are possible too, but a bit more cumbersome.) The version of the Berry--Esseen theorem we shall use is the following. 

\begin{theorem} \label{berryesseen}
Let $X_1,\dots,X_n$ be independent and identically distributed random variables with $\E X_i=0$, $\E X_i^2=\sigma^2$ and $\E|X_i|^3=\rho$ for each $i$. Let $X=X_1+\dots+X_n$, let $x$ be a real number, and let $Y$ be a random variable with the standard normal distribution. Then
\[\Bigl|\P[X\leq x\sigma\sqrt n]-\P[Y\leq x]\Bigr|\leq\frac\rho{2\sigma^3\sqrt n}.\]
\end{theorem}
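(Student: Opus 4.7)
The plan is to follow the classical Fourier-analytic proof of the Berry--Esseen theorem. After the linear rescaling $X_i' = (X_i-\mu)/\sigma$, which has mean $0$, variance $1$ and third absolute moment $\rho' = \rho/\sigma^3$, the statement reduces to showing $\sup_x |F_n(x) - \Phi(x)| \leq \rho'/(2\sqrt{n})$, where $F_n$ is the CDF of $(X_1'+\cdots+X_n')/\sqrt{n}$ and $\Phi$ is the standard normal CDF.

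The workhorse is Esseen's smoothing inequality: for any $T>0$,
\[\sup_x |F_n(x) - \Phi(x)| \leq \frac{1}{\pi} \int_{-T}^{T} \left|\frac{\phi_n(t) - e^{-t^2/2}}{t}\right|\, dt + \frac{C}{T},\]
where $\phi_n(t) = \phi(t/\sqrt{n})^n$ and $\phi$ is the characteristic function of $X_i'$. I would choose $T$ of order $\sqrt{n}/\rho'$ so that the two terms balance.

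To estimate the integrand, I would expand $\phi$ by Taylor's theorem: since $\E X_i' = 0$ and $\E (X_i')^2 = 1$, we have $\phi(s) = 1 - s^2/2 + R(s)$ with $|R(s)| \leq \rho' |s|^3/6$. Taking logarithms for $|s|$ small gives $\log \phi(s) = -s^2/2 + O(\rho' |s|^3)$, so in the range $|t|\leq T$ one obtains $\phi(t/\sqrt{n})^n = e^{-t^2/2}\exp(O(\rho' |t|^3/\sqrt{n}))$. The difference $|\phi_n(t)-e^{-t^2/2}|$ is then bounded by $(\rho' |t|^3/\sqrt{n}) e^{-t^2/4}$; dividing by $|t|$ and integrating produces a contribution of order $\rho'/\sqrt{n}$, which is the claimed rate.

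The structural argument above is quite robust; the main obstacle is pinning down the particular constant $1/2$ in the stated bound. A naive execution of the above plan yields a constant considerably larger than $1$, and shaving it down requires a careful choice of $T$, the sharpest form of the smoothing inequality, and tighter Taylor estimates --- the subject of a long refinement history, culminating in the current best universal constant of roughly $0.4748$ due to Shevtsova. Since Berry--Esseen is a classical black box, in practice I would simply cite a reference (such as Feller's book) for the version stated rather than reprove it.
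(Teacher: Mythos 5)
Your proposal is correct and ends up in the same place as the paper: the paper does not prove Theorem~\ref{berryesseen} at all, but simply cites Shevtsova for the constant $1/2$ (noting that Berry and Esseen obtained a larger constant). Your Fourier-analytic sketch is the standard route but, as you correctly observe, it does not yield the stated constant, so citing a reference is exactly what is done here.
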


\noindent Berry and Esseen obtained the same theorem but with a larger absolute constant on the right-hand side. The constant of 1/2 (in fact, slightly better) was obtained by Shevtsova \cite{shevtsova}.

\begin{corollary} \label{inrange}
Let $X_1,\dots,X_n$ and $X$ be as in the previous lemma. Then
\[\P[\sigma\sqrt n\leq X\leq2\sigma\sqrt n]\geq \frac 18-\frac\rho{\sigma^3\sqrt n}\]
and
\[\P[-2\sigma\sqrt n\leq X\leq-\sigma\sqrt n]\geq \frac 18-\frac\rho{\sigma^3\sqrt n}.\]
\end{corollary}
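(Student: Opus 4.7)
The plan is to obtain both range estimates by applying Theorem~\ref{berryesseen} twice and subtracting. Writing $\Phi$ for the standard normal cumulative distribution function, Berry--Esseen applied at $x=2$ and at $x=1$ yields the two inequalities
\[
\P[X-\mu n\le 2\sigma\sqrt n]\ge \Phi(2)-\frac{\rho}{2\sigma^3\sqrt n},
\qquad
\P[X-\mu n\le \sigma\sqrt n]\le \Phi(1)+\frac{\rho}{2\sigma^3\sqrt n}.
\]
Since
\[
\P[\sigma\sqrt n\le X-\mu n\le 2\sigma\sqrt n]\ge \P[X-\mu n\le 2\sigma\sqrt n]-\P[X-\mu n\le \sigma\sqrt n],
\]
subtracting yields a lower bound of $\Phi(2)-\Phi(1)-\rho/(\sigma^3\sqrt n)$. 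To complete the first inequality I then verify the purely numerical fact $\Phi(2)-\Phi(1)\ge\tfrac18$: standard values $\Phi(1)\approx 0.8413$ and $\Phi(2)\approx 0.9772$ give a gap of roughly $0.1359>0.125$.

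For the second inequality I apply exactly the same two-sided Berry--Esseen argument at $x=-1$ and $x=-2$, using the symmetry $\Phi(-x)=1-\Phi(x)$ so that $\Phi(-1)-\Phi(-2)=\Phi(2)-\Phi(1)$; the same numerical gap is therefore available and yields the matching bound. There is no genuine obstacle here — the whole argument is a one-line consequence of Berry--Esseen together with the numerical check that the normal mass of the interval $[1,2]$ exceeds $\tfrac18$, which is precisely what lets the clean absolute constant on the right-hand side be extracted.
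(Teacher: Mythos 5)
Your argument is exactly the paper's: apply the Berry--Esseen bound at $x=1$ and $x=2$, combine the two error terms of $\rho/(2\sigma^3\sqrt n)$ into a total error of $\rho/(\sigma^3\sqrt n)$, and then use the numerical fact that the standard normal assigns mass greater than $\tfrac18$ to $[1,2]$, with the second estimate following by symmetry. The paper states this more tersely but the substance and every step are identical.
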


\begin{proof}
By Theorem \ref{berryesseen}, the first probability differs from $\P[1\leq Y\leq 2]$ by at most $\rho/\sigma^3\sqrt n$. But a standard normal lies in the interval $[1,2]$ with probability greater than 1/8, which gives the first estimate. The second is proved in the same way.
\end{proof}

Finally, we shall need a quantitative version of the Poisson
limit theorem for sums of independent random variables
with small expectations. For this version see~(1.1)
in~\cite{poisson}.
\begin{theorem}\label{poisson}
Let $X_1,\ldots,X_n$ be independent and
identically distributed Bernoulli random variables
with $\E X_i=\lambda/n$. Let $X=X_1+\ldots+X_n$,
let $S$ be a subset of integers and let $Y$
be a random variable with the Poisson distribution
with mean $\lambda$. Then
\[\Bigl|\P[X\in S]-\Pr[Y\in S]\Bigr|\le\lambda^2/n\;.
\]
In particular,
$\P[X=0]\ge e^{-\lambda}-\lambda^2/n$
and $\P[X=1]\ge \lambda e^{-\lambda}-\lambda^2/n$.
\end{theorem}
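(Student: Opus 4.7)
The plan is to prove the bound via a coupling argument of Le~Cam type: I construct, for each $i$, a joint distribution of the Bernoulli variable $X_i$ with a Poisson$(\lambda/n)$ variable $Y_i$ so that $\P[X_i\neq Y_i]$ is as small as possible, make the pairs $(X_i,Y_i)$ independent across $i$, and then sum. Since independent Poissons add, $Y:=\sum_i Y_i$ will be Poisson$(\lambda)$, and the total-variation distance between $X$ and $Y$ will be controlled by the probability that the coupling fails somewhere.

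For the individual coupling, write $p=\lambda/n$. The key observation is the elementary inequality $e^{-p}\ge 1-p$, which lets me put $X_i=Y_i=0$ on an event of probability $1-p$. On a further disjoint event of probability $pe^{-p}$ I set $X_i=Y_i=1$. That accounts for total mass $1-p+pe^{-p}$; on the remaining event, of probability $p(1-e^{-p})$, I set $X_i=1$ and distribute $Y_i$ over $\{0\}\cup\{2,3,\dots\}$ so that the marginal of $Y_i$ is exactly Poisson$(\lambda/n)$. (A direct check shows the masses left over on $\{0\}$ and on $\{k\ge 2\}$ sum to $p(1-e^{-p})$, so this is possible.) Hence
\[
\P[X_i\neq Y_i]\;=\;p(1-e^{-p})\;\le\;p^2\;=\;\lambda^2/n^2,
\]
using $1-e^{-p}\le p$.

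Taking independent copies and applying the union bound gives $\P[X\neq Y]\le n\cdot\lambda^2/n^2=\lambda^2/n$. For any set $S\subseteq\Z$, the events $\{X\in S\}$ and $\{Y\in S\}$ can only disagree on $\{X\neq Y\}$, so
\[
\bigl|\P[X\in S]-\P[Y\in S]\bigr|\;\le\;\P[X\neq Y]\;\le\;\lambda^2/n,
\]
which is the main inequality. The two specific consequences follow immediately by taking $S=\{0\}$, giving $\P[X=0]\ge e^{-\lambda}-\lambda^2/n$, and $S=\{1\}$, giving $\P[X=1]\ge \lambda e^{-\lambda}-\lambda^2/n$.

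The only delicate point is verifying that the remaining mass in the individual coupling can indeed be split over $\{0\}\cup\{2,3,\dots\}$ to give the correct Poisson marginal; this is purely bookkeeping once one notes that $e^{-p}-(1-p)$ plus $\sum_{k\ge 2}e^{-p}p^k/k!$ equals precisely $p(1-e^{-p})$. Everything else is routine. Since the paper attributes the statement to~\cite{poisson}, an alternative would be simply to quote that reference, but the coupling proof sketched here is short and self-contained, so I would include it rather than rely on a black box.
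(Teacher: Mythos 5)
Your coupling argument is correct. The paper itself does not prove Theorem~\ref{poisson}; it merely cites inequality (1.1) of the reference~\cite{poisson}, so there is no in-paper proof to compare against. The Le~Cam maximal-coupling construction you describe is the standard way to obtain the bound: the mass accounting is right (the leftover mass $e^{-p}-(1-p)$ on $\{0\}$ plus $1-e^{-p}-pe^{-p}$ on $\{k\ge 2\}$ does sum to $p(1-e^{-p})$), the per-coordinate failure probability $p(1-e^{-p})\le p^2$ is the right estimate, and the union bound plus the total-variation inequality $|\P[X\in S]-\P[Y\in S]|\le\P[X\ne Y]$ closes the argument with the stated constant $\lambda^2/n$. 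Including this short self-contained proof instead of relying on the external citation is a reasonable choice and buys the reader independence from the cited source at essentially no cost in length; the only caveat is that the cited version may be stated in slightly greater generality (unequal success probabilities $p_i$), but your argument extends verbatim to that case by replacing $np^2$ with $\sum_i p_i^2$, so nothing is lost.
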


\subsection{Bounding the magnitude of the characteristic function away from 1}

As in many proofs of central-limit-type theorems, we shall use characteristic functions, or equivalently Fourier analysis. Recall that the characteristic function of $(U,V)$ is the function 
\[\hat f(\a,\be)=\E e(\a U+\be V),\]
where $e(x)$ is shorthand for $\exp(2\pi i x)$. We shall say more about the importance of the characteristic function later, but for now we simply ask the reader to take it on trust that it will be useful to us to bound $|\hat f(\a,\be)|$ away from 1, except when $\a$ and $\be$ are very small. (Note that when $\a=\be=0$ then $\hat f(\a,\be)=1$, so some dependence is necessary.)

We shall also make use of the following small technicality.

\begin{lemma}\label{quadruple}
Let $\theta_1,\theta_2,\theta_3$ and $\theta_4$ and $\e$ be real numbers such that the distance from $\theta_1-\theta_2-\theta_3+\theta_4$ to the nearest integer is at least $\e$. Then $|e(\theta_1)+e(\theta_2)+e(\theta_3)+e(\theta_4)|\leq 4-\e^2$.
\end{lemma}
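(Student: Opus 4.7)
The plan is to exploit the sum-to-product identity $e(\theta)+e(\theta')=2\cos(\pi(\theta-\theta'))\,e^{i\pi(\theta+\theta')}$ after pairing the indices correctly, so that the hypothesis quantity $\d:=\theta_1-\theta_2-\theta_3+\theta_4$ appears naturally as the phase difference between two vectors. Specifically, I would pair $\{1,4\}$ with $\{2,3\}$ and set
\[w_1=e(\theta_1)+e(\theta_4)=2\cos\psi_1\,e^{i\phi_1},\qquad w_2=e(\theta_2)+e(\theta_3)=2\cos\psi_2\,e^{i\phi_2},\]
where $\phi_1=\pi(\theta_1+\theta_4)$, $\phi_2=\pi(\theta_2+\theta_3)$, and crucially $\phi_1-\phi_2=\pi\d$ exactly. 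The other two natural pairings do not produce $\d$ and so would not work.

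From here it is a direct calculation. Expanding gives
\[|w_1+w_2|^2 = 4\cos^2\psi_1+4\cos^2\psi_2+8\cos\psi_1\cos\psi_2\cos(\pi\d),\]
and setting $A=2|\cos\psi_1|$, $B=2|\cos\psi_2|$ (each in $[0,2]$) and absorbing the sign of $\cos\psi_1\cos\psi_2$ into $|\cos(\pi\d)|$ yields
\[|w_1+w_2|^2 \le (A+B)^2 - 2AB\bigl(1-|\cos(\pi\d)|\bigr).\]
To control the last factor, write $\d=n+s$ with $n\in\Z$ the nearest integer and $|s|\in[\e,1/2]$; then $|\cos(\pi\d)|=\cos(\pi|s|)\le\cos(\pi\e)$, and the standard inequality $1-\cos x\ge 2x^2/\pi^2$ on $[0,\pi]$ gives $1-|\cos(\pi\d)|\ge 2\e^2$.

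We may assume $\e\le 1/2$, since otherwise the hypothesis is vacuous (distance to the nearest integer is always at most $1/2$). Then $1-2\e^2\ge 0$, and
\[|w_1+w_2|^2 \le (A+B)^2 - 4AB\e^2 = A^2+B^2+2AB(1-2\e^2),\]
which, being increasing in both $A$ and $B$ on $[0,2]$, is maximized at $A=B=2$, yielding the bound $16-16\e^2$. Taking square roots and applying $\sqrt{1-t}\le 1-t/2$ gives
\[|e(\theta_1)+e(\theta_2)+e(\theta_3)+e(\theta_4)|\le 4\sqrt{1-\e^2}\le 4-2\e^2\le 4-\e^2,\]
as required. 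There is no real obstacle in this lemma: its only genuine content is the initial choice of pairing, after which everything reduces to elementary trigonometric estimates, and the final bound in fact has a factor of $2$ to spare.
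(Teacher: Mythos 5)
Your argument is correct and takes a genuinely different route from the paper's. The paper pairs $(\theta_1,\theta_2)$ and $(\theta_3,\theta_4)$, applies the one-pair bound $|e(\theta)+e(\phi)|\leq 2(1-(\theta-\phi)^2)$ (after shifting by integers so that $|\theta_1-\theta_2|,|\theta_3-\theta_4|\leq 1/2$) to each pair, adds the two bounds via the triangle inequality, and finishes with $2a^2+2b^2\geq(a-b)^2$ applied to $a=\theta_1-\theta_2$, $b=\theta_3-\theta_4$; the relative phase of the two pair-sums is never used, only their moduli. You instead pair $(\theta_1,\theta_4)$ with $(\theta_2,\theta_3)$ precisely so that, after sum-to-product, the angle between the two resulting vectors is $\pi\d$, and you compute $|w_1+w_2|^2$ exactly rather than passing to the triangle inequality; the periodicity in the hypothesis is then absorbed automatically by the $|\cos(\pi\d)|$ factor with no integer shifts required, and this extra precision yields the sharper constant $4-2\e^2$. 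Incidentally, your remark that the other two pairings ``do not produce $\d$'' is accurate for the sum-to-product strategy, but the paper demonstrates that the pairing $\{1,2\},\{3,4\}$ does work by a different mechanism (each within-pair difference can be close to an integer only if $\d$ is). Both proofs are complete and elementary; the paper's is shorter, yours is tighter and makes more transparent that the loss comes from the relative phase rather than the individual pair moduli.
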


\begin{proof}
First note that for any $\theta$ and $\phi$ with $|\theta-\phi|\leq 1/2$ we have the inequality
\[|e(\theta)+e(\phi)|^2=2+2\cos(2\pi(\theta-\phi))\leq 4-(2\pi)^2(\theta-\phi)^2+(2\pi)^4(\theta-\phi)^4/12\leq 4(1-(\theta-\phi)^2)^2,\]
and therefore 
\[|e(\theta)+e(\phi)|\leq 2(1-(\theta-\phi)^2).\]
Since adding an integer makes no difference, we may assume that $|\theta_1-\theta_2|$ and $|\theta_3-\theta_4|$ are both at most $1/2$. It follows that
\[|e(\theta_1)+e(\theta_2)+e(\theta_3)+e(\theta_4)|\leq 4-2(\theta_1-\theta_2)^2-2(\theta_3-\theta_4)^2\leq 4-(\theta_1-\theta_2-\theta_3+\theta_4)^2,\]
which proves the result, since $|\theta_1-\theta_2-\theta_3+\theta_4|\geq\e$.
\end{proof}

Since $(U,V)=(g_A(j),j)$ for a randomly chosen $j\in[n]$, it follows that
\[\hat f(\a,\be)=n^{-1}\sum_{j=1}^ne(\a g_A(j)+\be j).\]

Note that if $j_1-j_2=j_3-j_4$ and $\e\leq|\a(g_A(j_1)-g_A(j_2)-g_A(j_3)+g_A(j_4))|\leq 1/2$
for some $\e>0$, then for every $\be$ we have that
\[\frac{1}{4}|e(\a g_A(j_1)+\be j_1)+e(\a g_A(j_2)+\be j_2)+e(\a g_A(j_3)+\be j_3)+e(\a g_A(j_4)+\be j_4)|\leq 1-\e^2/4.\]
This follows by setting $\theta_i=\a g_A(j_i)+\be j_i$ and observing that since $\be(j_1-j_2-j_3+j_4)=0$, the distance from $\theta_1-\theta_2-\theta_3+\theta_4$ to the nearest integer is the same as the distance from $\a(g_A(j_1)-g_A(j_2)-g_A(j_3)+g_A(j_4))$ to the nearest integer, which by hypothesis is at least $\e$. This allows us to apply Lemma \ref{quadruple}.

Our strategy now will be to prove that with high probability there are many non-overlapping quadruples $(j_1,j_2,j_3,j_4)$ with $j_1-j_2=j_3-j_4$ and $\e\leq|\a(g_A(j_1)-g_A(j_2)-g_A(j_3)+g_A(j_4))|\leq 1/2$
for appropriately chosen $\e$.

\begin{lemma}\label{quadruple-clt}
For large enough $n$ the following holds. 
Let $j,m,\alpha$ be such that $10^6\leq m\leq n/\log n$,
$j+3m\le n/2$ and $|\alpha|\sqrt{m}\le 1/12$. Let $z=|\{h:a_h\le j+2m\}|$ and assume that
$z\le 2n/3$.

Then, if $A$ is a random sequence in $[n]^n$, after conditioning on 
the value of $z$,
with probability at least $1/10$,
the distance from
$\alpha(g_A(j)-g_A(j+m)-g_A(j+2m)+g_A(j+3m))$
to the nearest integer
is at least $|\alpha|\sqrt{m/6}$.
\end{lemma}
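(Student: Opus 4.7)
The plan is to observe that, after the conditioning, the quantity of interest becomes an affine function of a half-integer random variable $R$ which is a sum of $n-z$ i.i.d.\ pieces, and then to invoke Corollary~\ref{inrange} to spread $R$ out on the scale $\sqrt{m/6}$.

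First I will observe that conditioning on $g_A(j')$ for all $j'\le j+2m$ is equivalent to conditioning on the counts $c_k=|\{i:a_i=k\}|$ for $k\le j+2m$, since these can be recovered from the successive differences of $f_A$. In particular, $g_A(j)$, $g_A(j+m)$, $g_A(j+2m)$ and $c_{j+2m}$ are all determined by the conditioning. Combined with the identity
\[f_A(j+3m)-f_A(j+2m)=\tfrac{1}{2}c_{j+2m}+(c_{j+2m+1}+\cdots+c_{j+3m-1})+\tfrac{1}{2}c_{j+3m},\]
this shows that $\alpha(g_A(j)-g_A(j+m)-g_A(j+2m)+g_A(j+3m))$ takes the form $C+\alpha R$, where $C$ depends only on the conditioning and $R=c_{j+2m+1}+\cdots+c_{j+3m-1}+\tfrac{1}{2}c_{j+3m}$. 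Under the conditional law the $n-z$ indices $i$ with $a_i>j+2m$ have values i.i.d.\ uniform on $\{j+2m+1,\dots,n\}$, so $R$ is a sum of $n-z$ i.i.d.\ copies of a variable $X\in\{0,\tfrac{1}{2},1\}$ with $\P[X=1]=(m-1)/N$ and $\P[X=\tfrac{1}{2}]=1/N$, where $N=n-j-2m$.

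The next step is to control the moments of $R$. A routine computation yields $\var(X)=(m-3/4)/N-(m-1/2)^2/N^2$. Using $n-z\ge n/3$ (from $z\le 2n/3$), $N\ge n/2$ (from $j+3m\le n/2$) and $m\le n/\log n$ (to make the $(m-1/2)^2/N^2$ term negligible), I get $\sigma_R^2:=\var(R)=(n-z)\var(X)\ge m/6$ for large $n$; the same computation gives a companion upper bound $\sigma_R\le\sqrt{2m}$. Since $X\in[0,1]$, its centred third absolute moment is bounded by $\var(X)$, so the Berry--Esseen correction in Corollary~\ref{inrange} is at most $1/\sigma_R\le\sqrt{6/m}$, which is smaller than $1/40$ because $m\ge 10^6$. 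Corollary~\ref{inrange} then yields
\[\P\bigl[R-\E R\in[\sigma_R,2\sigma_R]\bigr]\ge\tfrac{1}{8}-\sqrt{6/m}>\tfrac{1}{10}\]
and the mirror image for the lower tail $[-2\sigma_R,-\sigma_R]$.

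Finally, I will convert this into a lower bound on the distance from $C+\alpha R$ to the nearest integer. Let $N_0$ be the integer nearest $C+\alpha\E R$; because $|\alpha|\cdot 2\sigma_R\le 2\sqrt{2}\,|\alpha|\sqrt m\le\sqrt{2}/6<\tfrac{1}{2}$, on either Berry--Esseen event $C+\alpha R$ stays strictly closer to $N_0$ than to any other integer. Assuming $\alpha>0$ (the other case is symmetric), if $C+\alpha\E R\ge N_0$ I use the upper-tail event $R-\E R\ge\sigma_R$ to obtain $C+\alpha R-N_0\ge\alpha\sigma_R\ge|\alpha|\sqrt{m/6}$, and the lower-tail event handles $C+\alpha\E R<N_0$ symmetrically. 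The step I expect to require the most care is the bookkeeping of the variance in the second paragraph: the argument needs both a clean lower bound $\sigma_R\ge\sqrt{m/6}$ (to furnish the required separation) and the upper bound $\sigma_R\le\sqrt{2m}$ (so that the fluctuation remains inside half a period), and both must survive the edge terms that appear in the multinomial variance formula.
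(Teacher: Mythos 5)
Your approach is essentially the same as the paper's: condition, realize $g_A(j+3m)$ as an affine function of a sum $R$ of $n-z$ i.i.d.\ $\{0,\tfrac12,1\}$-valued variables, apply Corollary~\ref{inrange} to push $R$ into an interval $[\E R+\sigma_R,\E R+2\sigma_R]$ or its mirror, and then argue about distance to integers. Your variance bookkeeping is correct (the paper uses the looser $\sigma^2\le 4m/n$ where you use $2m/n$, but both suffice) and your Berry--Esseen error estimate is fine.

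However, there is a genuine slip in the final paragraph. The assertion that, because $2|\alpha|\sigma_R<\tfrac12$, ``on either Berry--Esseen event $C+\alpha R$ stays strictly closer to $N_0$ than to any other integer'' is false: $2|\alpha|\sigma_R<\tfrac12$ together with $|C+\alpha\E R-N_0|\le\tfrac12$ only confines $C+\alpha R$ to $(N_0-1,N_0+1)$, so when $C+\alpha\E R$ sits near $N_0\pm\tfrac12$ the shifted point can land on the far side of the half-integer and be nearest to $N_0\pm1$. Consequently the bound you actually derive, $C+\alpha R-N_0\ge\alpha\sigma_R$, is only a lower bound on the distance to $N_0$, not to the nearest integer. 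The conclusion is still salvageable, but it requires the stronger numerical fact $3|\alpha|\sigma_R\le\tfrac12$ (equivalently $|\alpha|\sigma_R\le\tfrac16$, which you do have since $|\alpha|\sigma_R\le\sqrt2|\alpha|\sqrt m\le\sqrt2/12$): on the chosen tail event, $|C+\alpha R-N_0|\le\tfrac12+2|\alpha|\sigma_R$, so the distance to $N_0\pm1$ is at least $\tfrac12-2|\alpha|\sigma_R\ge|\alpha|\sigma_R$, and the distance to $N_0$ is at least $|\alpha|\sigma_R$ by your argument. The paper sidesteps this issue by showing directly that for any $M$ and any $0\le\eta\le\tfrac16$, at least one of the two intervals $[M-2\eta,M-\eta]$, $[M+\eta,M+2\eta]$ keeps distance $\ge\eta$ from $\Z$, but your one-tail-chosen-by-the-sign-of-$C+\alpha\E R-N_0$ route works once the $N_0\pm1$ case is checked.
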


\begin{proof}
After conditioning on $z$, we have
\[f_A(j+3m)=z+\big|\{h:j+2m<a_h<j+3m\}\big|+\frac 12\big|\{h:a_h=j+3m\}\big|,\]
which is a sum of $n-z$ independent random variables $X_1,\dots,X_{n-z}$, each of which takes the value $1$ with probability $(m-1)/(n-j-2m)$, $1/2$ with probability $1/(n-j-2m)$, and $0$ with probability $(n-j-3m)/(n-j-2m)$.

We shall now apply Corollary \ref{inrange} to these random variables. Back-of-envelope calculations give the bounds $\sigma^2,\rho\leq 4m/n$ and $\sigma^2\geq m/2n$ for sufficiently large $n$. Therefore, writing $X$ for $X_1+\dots+X_{n-z}$, we have that 
\[\P[\sigma\sqrt{n-z}\le X-\E X\le 2\sigma\sqrt{n-z}]\geq\frac 18-\frac\rho{\sigma^3\sqrt{n-z}}\]
and
\[\P[-2\sigma\sqrt{n-z}\le X-\E X\leq -\sigma\sqrt{n-z}]\geq\frac 18-\frac\rho{\sigma^3\sqrt{n-z}}.\]
Note that $\rho/(\sigma^3\sqrt{n-z})\leq 1/40$ 
(since $z\leq 2n/3$ and $m\ge 10^6$), so both these probabilities are at least 1/10.

Since $f_A(j)-g_A(j)=j-1/2$ for every $j\in[n]$, we have 
\[g_A(j)-g_A(j+m)-g_A(j+2m)+g_A(j+3m)=f_A(j)-f_A(j+m)-f_A(j+2m)+f_A(j+3m)\]
for every $j\in[n]$. 

Let $\eta=|\alpha|\sigma\sqrt{n-z}$.
From the above, there is a probability of at least 1/10 that $\a(g_A(j)-g_A(j+m)-g_A(j+2m)+g_A(j+3m))\in[M+\eta,M+2\eta]$ for some $M$ and also a probability of at least 1/10 that $\a(g_A(j)-g_A(j+m)-g_A(j+2m)+g_A(j+3m))\in[M-2\eta,M-\eta]$. 

Since we assumed $|\alpha|\sqrt{m}\le 1/12$, we have that
$\eta\le2|\alpha|\sqrt{m}\le1/6$. It can be easily checked
that for any $M$ and $0\le\eta\le 1/6$, for at least one
of the intervals $[M-2\eta,M-\eta]$ and $[M+\eta,M+2\eta]$,
its distance from integers must be at least
$\eta$.
From this it follows that with probability at least 1/10, the distance from $\a(g_A(j)-g_A(j+m)-g_A(j+2m)+g_A(j+3m))$ to the nearest integer is at least $\eta$, which is at least
$|\alpha|\sqrt{m/6}$.
\end{proof}

Because of the assumptions $|\alpha|\sqrt{m}\le 1/12$ and $m\ge 10^6$, Lemma~\ref{quadruple-clt}
cannot be applied if $|\alpha|>1/(12\cdot 10^3)$. For that case we
shall need an analogous lemma which sets $m=1$ and applies Theorem~\ref{poisson}.

\begin{lemma}\label{quadruple-poisson}
For large enough $n$ the following holds. Let $\alpha$ be such that
$|\alpha|\le 1/2$.
Let $j$ be such that $j+3\le n/2$. Let $z=|\{h:a_h\le j+2\}|$ and assume that
$z\le 2n/3$.

Then, if $A$ is a random sequence in $[n]^n$, after conditioning on 
the values of $z$,
with probability at least $1/10$,
the distance from
$\alpha(g_A(j)-g_A(j+1)-g_A(j+2)+g_A(j+3))$
to the nearest integer
is at least $|\alpha|/4$.
\end{lemma}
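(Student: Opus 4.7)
My plan is to mirror the proof of Lemma~\ref{quadruple-clt} but with $m = 1$, replacing the Berry--Esseen input (via Corollary~\ref{inrange}) by the Poisson approximation of Theorem~\ref{poisson}.

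First, I would condition on the values $g_A(j')$ for $j' \le j+2$, which is equivalent to fixing the multiplicities $M_k := |\{i : a_i = k\}|$ for $k \le j+2$. Under this conditioning, the $n-z$ indices $h$ with $a_h > j+2$ have their values i.i.d.\ uniform on $\{j+3, \ldots, n\}$. Using $g_A(j') = f_A(j') - j' + \tfrac{1}{2}$ one obtains by telescoping
\[
W := g_A(j) - g_A(j+1) - g_A(j+2) + g_A(j+3) = \tfrac{1}{2}(M_{j+2} - M_j - M_{j+1}) + \tfrac{1}{2} M_{j+3} =: C + N/2,
\]
where $C \in \tfrac{1}{2}\Z$ is fixed by the conditioning and $N := M_{j+3}$ is the only remaining random quantity. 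Then $N$ is Binomial$(n-z,\,1/(n-j-2))$ with mean $\lambda = (n-z)/(n-j-2) \in [1/3, 2]$, by the hypotheses $z \le 2n/3$ and $j+2 \le n/2$. Theorem~\ref{poisson} yields $|\P[N = k] - e^{-\lambda}\lambda^k/k!| \le \lambda^2/n = O(1/n)$, and for large $n$ both $\P[N=0] \ge e^{-2} - o(1)$ and $\P[N=1] \ge (1/3) e^{-1/3} - o(1)$ exceed $1/10$ throughout the range of $\lambda$.

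The geometric core is the observation that for any $y_0 \in \R$ and $|\alpha| \le 1/2$, among the three points $y_0,\, y_0 + \alpha/2,\, y_0 + \alpha$ at least one lies at distance $\ge \min(1/4, |\alpha|/2) = |\alpha|/2$ from $\Z$. Indeed, if $y_0$ and $y_0 + \alpha$ both lay within $|\alpha|/2$ of integers $m_0$ and $m_2$, subtraction would force $m_2 - m_0 \in (0, 2|\alpha|) \subseteq (0, 1)$ (taking $\alpha > 0$ WLOG), which contains no integer.

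Applying this geometric fact to $y_k := \alpha(C + k/2)$ for $k = 0, 1, 2$ yields some $k^* \in \{0, 1, 2\}$ for which the distance from $y_{k^*}$ to $\Z$ is at least $|\alpha|/2$. It then remains to check $\P[N = k^*] \ge 1/10$. The cases $k^* \in \{0, 1\}$ are immediate from the Poisson estimates. The main obstacle will be $k^* = 2$, where $\P[N = 2] = \lambda^2 e^{-\lambda}/2 + O(1/n)$ can fall below $1/10$ when $\lambda$ is close to $1/3$. To handle this residual case I would exploit the lattice constraint $\alpha C \in (\alpha/2)\Z$ arising from $C \in \tfrac{1}{2}\Z$, which in most configurations prevents $y_0$ and $y_1$ from simultaneously landing in the bad set, and for the remaining configurations I would boost $\{N = 2\}$ by combining it with events $\{N = k\}$ for larger $k$ whose $y_k$ is likewise good (by a continuation of the same case analysis), so as to recover a probability of at least $1/10$. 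This final bookkeeping is the most delicate part of the argument.
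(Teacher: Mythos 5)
The paper's own proof is shorter and avoids the obstacle you run into: it writes $f_A(j+3)=z+X/2$ with $X=|\{h: a_h=j+3\}|$ binomial with mean $\lambda=(n-z)/(n-j-2)\in[1/3,2]$, uses Theorem~\ref{poisson} to get $\min(\P[X=0],\P[X=1])\ge e^{-2}-O(1/n)>1/10$, and then applies a \emph{two-point} pigeonhole to $\alpha W_0$ and $\alpha W_0+\alpha/2$. So it works only with $N\in\{0,1\}$ and never needs $\P[N=2]$. Your setup (conditioning reduces everything to $N=M_{j+3}$, the binomial-to-Poisson reduction, the range of $\lambda$) is identical to the paper's; the departure is the three-point geometric lemma.

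That departure creates a genuine gap, as you partly anticipate, and your proposed repairs do not close it. The three-point fact you prove really only locates a good point among $\{y_0,y_2\}$; if it is $y_2$ you need $\P[N=2]\ge 1/10$, and near $\lambda=1/3$ one has $\P[N=2]\approx e^{-1/3}/18\approx 0.04$. Boosting to $\{N\ge 2\}$ does not help: $\P[N\ge 2]=1-e^{-\lambda}(1+\lambda)-O(1/n)\approx 0.045$ at $\lambda=1/3$, and in any case you would need every $y_k$ with $k\ge 2$ to be good, which is false. The lattice observation $C\in\tfrac12\Z$ also does not rescue the argument: take $\alpha=4/9$ and $C=2$, so $y_0=8/9$ and $y_1=10/9$; both are at distance $1/9<\alpha/2=2/9$ from $\Z$, and the unique good point among $\{y_0,y_1,y_2\}$ is $y_2$.

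That same example is worth dwelling on, because it shows the \emph{stated} conclusion with constant $\min(1/4,|\alpha|/2)$ is not actually attainable by a two-value pigeonhole (the two values $\alpha W_0$ and $\alpha W_0+\alpha/2$ can sit symmetrically about an integer, each at distance only $|\alpha|/4$). What the two-point argument honestly gives is the weaker bound $\min(1/4,|\alpha|/4)$: if both $y_0$ and $y_1=y_0+\alpha/2$ were within $|\alpha|/4$ of integers $m_0,m_1$, then $|m_0-m_1|<|\alpha|\le 1/2$ forces $m_0=m_1=m$, but then $|y_0-m|+|y_1-m|<|\alpha|/2=|y_1-y_0|$, contradicting the triangle inequality. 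This weaker constant is all that Lemma~\ref{fcbound} actually uses (it only shifts absolute constants in $\delta$, and the inequality $\e^2/3000\ge\delta$ still holds with $\e=|\alpha|/4$ for $|\alpha|\ge 1/(12\cdot 10^3)$). So the clean fix for your proof, and for the lemma itself, is to drop $N=2$ entirely, work with $N\in\{0,1\}$ exactly as the paper does, and settle for the distance bound $\min(1/4,|\alpha|/4)$.
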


\begin{proof}
In a similar way to what we did in the proof of Lemma~\ref{quadruple-clt}, after
conditioning on the value of $z$, we write
$f_A(j+3)=z+(1/2)\sum_{i=1}^{n-z}X_i$, where each of the independent
random variables $X_i$ is Bernoulli with $\E X_i=1/(n-j-2)$.
Let $X=\sum_{i=1}^{n-z}X_i$ and $\lambda=\E X=(n-z)/(n-j-2)$. Due to the assumptions $z\le 2n/3$ and $j+3\le n/2$,
we have that $1/3\le\lambda\le 2$. Then, by Theorem~\ref{poisson},
for large $n$,
\[
\min\{\Pr[X=0],\Pr[X=1]\}\ge\min\{e^{-2},e^{-1/3}/3\}-4/n> 1/10\;.
\]
Therefore, there exists a value $M$ such that 
$\alpha(g_A(j)-g_A(j+1)-g_A(j+2)+g_A(j+3))=M$ with probability at least
$1/10$ and $\alpha(g_A(j)-g_A(j+1)-g_A(j+2)+g_A(j+3))=M+|\alpha|/2$
also with probability at least $1/10$. Since $|\alpha|\le1/2$, 
for at least one of those values
the distance to the nearest integer is at least $|\alpha|/4.$
\end{proof}

The main result of this subsection is the following.

\begin{lemma}\label{fcbound}
For large enough $n$ the following holds.
Let $\alpha$ be such that $|\alpha|\le 1/2$.
Then, with probability at least $1-n^{-10}$ over the choice of a random die $A$, 
for every $\beta$ we have
$|\hat f(\a,\be)|\leq 1-\delta$, where 
$\delta=\min\left(10^{-14},
10^{-12}\frac{\alpha^2n}{\log n}\right)$.
\end{lemma}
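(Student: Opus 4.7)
The plan is to find, with probability $1-n^{-10}$ over $A$, many non-overlapping quadruples $(j,j+m,j+2m,j+3m)\subset[n/2]$ on which $\alpha\bigl(g_A(j)-g_A(j+m)-g_A(j+2m)+g_A(j+3m)\bigr)$ is at distance at least $\e$ from $\Z$ for a suitable $\e$, and to apply Lemma~\ref{quadruple} to each. Because $j-(j+m)-(j+2m)+(j+3m)=0$, the coefficient of $\beta$ in $\theta_1-\theta_2-\theta_3+\theta_4$ vanishes (see the displayed inequality just before Lemma~\ref{quadruple-clt}), so the per-quadruple saving of $\e^2$ survives uniformly in $\beta$. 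Bounding the sum of the four unit-modulus summands in each good quadruple by $4-\e^2$ and the remaining summands trivially by $1$ apiece then yields
\[
\bigl|\hat f(\alpha,\beta)\bigr|\;\le\;1-\frac{N_{\text{good}}\,\e^2}{n}\;,
\]
where $N_{\text{good}}$ counts the good quadruples.

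I then case-split on $|\alpha|$. If $|\alpha|>1/(12\cdot 10^3)$ I take $m=1$ and apply Lemma~\ref{quadruple-poisson} with $\e=\min(1/4,|\alpha|/2)$; otherwise I take $m=\min\bigl(\lfloor 1/(144\alpha^2)\rfloor,\,\lfloor n/(C\log n)\rfloor\bigr)$ for a large absolute constant $C$, so that for $n$ large enough $10^6\le m\le n/\log n$ and $|\alpha|\sqrt{m}\le 1/12$, and apply Lemma~\ref{quadruple-clt} with $\e=|\alpha|\sqrt{m/6}$. In either case I pack $N=\Theta(n/m)$ disjoint quadruples $Q_1,\ldots,Q_N$ into $[n/2]$ and let $Y_i$ be the number of good ones among $Q_1,\ldots,Q_i$. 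Letting $\mathcal{F}_i$ be the $\sigma$-algebra generated by $(g_A(j'))_{j'\le \max Q_i+2m}$, the relevant quadruple lemma gives $\E[Y_i-Y_{i-1}\mid\mathcal{F}_{i-1}]\ge 1/10$ off the exceptional event $D_i=\{z_i>2n/3\}$, and a Chernoff bound shows $\P[D_i]\le e^{-\Omega(n)}$ on a purely random sequence. Lemma~\ref{deviation2} applied to $Y_i-i/10$ (with $c=1$) then yields $Y_N\ge N/20$ except with probability $e^{-\Omega(N)}+ne^{-\Omega(n)}$. For a large enough choice of $C$ this is at most $n^{-13}$, and Lemma~\ref{probzero} lets me condition on $\sum_i a_i=n(n+1)/2$ at a polynomial cost, leaving the failure probability at most $n^{-10}$ on the balanced model. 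Checking the size of $N\e^2/(20n)$ in each subcase verifies that the resulting deficit dominates the claimed $\delta$.

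The main obstacle is the Azuma step: since the conditional-mean bound $\ge 1/10$ holds only off the low-probability events $D_i$, the vanilla Lemma~\ref{deviation} cannot be applied directly and one must instead use its modification Lemma~\ref{deviation2}. Moreover, the concentration rate has to beat the $n^{3/2}$ blow-up coming from Lemma~\ref{probzero} when one transfers from the purely random sequence model, in which Lemmas~\ref{quadruple-clt} and~\ref{quadruple-poisson} are stated, to the balanced dice model. This forces $N\gtrsim\log n$ and is exactly the reason for the cap $m\le n/(C\log n)$ in the small-$|\alpha|$ branch of the parameter choice.
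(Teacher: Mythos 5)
Your high-level plan coincides with the paper's: case-split on $|\a|$, choose a quadruple scale $m$ and a threshold $\e$, use Lemma~\ref{quadruple} to obtain a per-quadruple saving of $\e^2$ uniformly in $\be$, and use Lemma~\ref{deviation2} to concentrate the number of good quadruples at a rate fast enough to survive the $n^{3/2}$ loss from Lemma~\ref{probzero}. But the packing $N=\Theta(n/m)$ of \emph{well-separated} quadruples is a factor of $m$ too sparse, and this destroys the bound in the small-$\a$ branch. With $\Theta(n/m)$ disjoint quadruples covering only $\Theta(n/m)$ of the $n$ summands, even a constant fraction of good ones gives a deficit
\[
\frac{N_{\mathrm{good}}\,\e^2}{n}\;=\;\Theta\!\left(\frac{\e^2}{m}\right)\;=\;\Theta\!\left(\frac{\a^2 m/6}{m}\right)\;=\;\Theta(\a^2),
\]
which is \emph{independent of $m$}. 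The required $\delta$, however, contains the branch $10^{-11}\a^2 n/\log n$: once $n/\log n$ exceeds the absolute constant hidden in your $\Theta(\a^2)$, your deficit falls below $\delta$ for all sufficiently small $\a$, so the closing claim that ``checking the size of $N\e^2/(20n)$ in each subcase verifies that the resulting deficit dominates the claimed $\delta$'' fails for large $n$.

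The paper recovers the missing factor of $m$ by not spacing the quadruples out. It covers $[1,4km]$ with $4km\approx n/2$ by $k\approx n/(8m)$ blocks $S_i$ of length $4m$, and inside each block it uses \emph{all} $m$ quadruples $(j,j+m,j+2m,j+3m)$ with $j$ in the first quarter of $S_i$; these $m$ quadruples partition $S_i$, so every summand in $[1,4km]$ is used. The martingale is then run at the block level, over the events $E_i$ that a constant fraction of the $m$ quadruples in $S_i$ are good, and the conditional lower bound on $\P[E_i]$ comes from linearity of expectation applied to Lemma~\ref{quadruple-clt} followed by a reverse-Markov step; the $m$ quadruples inside a block need not be independent, and indeed are not. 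A constant fraction of good blocks gives a global deficit $\Theta(\e^2)=\Theta(\a^2 m)$, which does match $\delta$. A secondary problem in your write-up is that your exceptional event $D_i=\{z_i>2n/3\}$ is not $\mathcal{F}_{i-1}$-measurable for disjoint quadruples, whereas the $D_i$ of Lemma~\ref{deviation2} is by definition the $\mathcal{F}_{i-1}$-measurable event $\{\E[Y_i-Y_{i-1}\mid\mathcal{F}_{i-1}]<0\}$; the paper handles this by using the $\mathcal{F}_{i-1}$-measurable surrogate $D_i'=\{t_{4(i-1)m}>5n/8\}$ together with a conditional Markov bound to make $z\le 2n/3$ likely. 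That part is fixable, but the $\Theta(n/m)$ packing is the real obstacle.
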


\begin{proof}
In fact, we shall prove the statement with probability at least $1-n^{-12}$ for
$A$ chosen as uniform random sequence from $[n]^n$. The statement for a random die
with probability at least $1-n^{-10}$ follows by Lemma~\ref{probzero}.

\medskip

To do this, we first choose some parameters following Lemmas~\ref{quadruple-clt} and~\ref{quadruple-poisson}.
More precisely, if $|\alpha|\ge 1/(12\cdot 10^3)$ let $m=1$
and $\e=|\alpha|/4$,
and otherwise let
$m=\lfloor\min(1/(144\alpha^2),\allowbreak n/(4\cdot 10^6 \log n))\rfloor$
and $\e=|\alpha|\sqrt{m/6}$.
Then pick $k$ maximal such that $4km\leq n/2$ and let $S_1,\dots,S_k$ be consecutive intervals of length $4m$: that is, $S_i=\{4(i-1)m+1,\dots,4im\}$. 
For each $i$, let $E_i$ be the event that at least
$m/20$ out of $m$ indices $j\in\{4(i-1)m+1,\ldots,(4i-3)m\}$
have the property that the distance from
$\alpha(g_A(j)-g_A(j+m)-g_A(j+2m)+g_A(j+3m))$ to the nearest
integer is at least $\e$.
We shall apply 
Lemma~\ref{deviation2}
to show that with high probability at
least $k/30$ of these events occur.

To that end, let $t_s=\{j:a_j\le s\}$. Note that the event $E_i$ depends
only on the values of $t_s$ for $s\le 4im$.
We shall define $D$ as the event that $t_{\lfloor n/2\rfloor}> 5n/8$. This event has probability at most $e^{-n/32}$, by Hoeffding's inequality. Now let $D'_i$ be the event
that $t_{4(i-1)m}>5n/8$. Our objective is to show that
\[
\P[E_i\;\vert\;t_1,\ldots,t_{4(i-1)m},\lnot D'_{i}]\ge 1/24\;.
\]
Once this is proved, it follows by Lemma~\ref{deviation2}
applied to the indicator random variables of $E_1,\ldots,E_k$
that at least $k/30$ events $E_i$ occur except with probability
$e^{-k/28800}+e^{-n/32}$, which is less than $n^{-12}$ since
$k\ge 4\cdot 10^5\log n$.

But if event $E_i$ does occur, then, as discussed, 
by Lemma~\ref{quadruple}
and by triangle inequality
$(4m)^{-1}\big|\sum_{j\in S_i} e(\alpha g_A(j)+\beta j)\big|\le 1-\e^2/80$. If $E_i$ holds for at least $k/30$ values of $i$, 
then since $4km\ge n/2.01$
we have that 
$n^{-1}|\sum_{j=1}^n e(\alpha g_A(j)+\beta j)|\le
1-\e^2/5000$, which is less than $1-\delta$ by a quick case
analysis.

\medskip

It remains to obtain a lower bound for the probability $\P[E_i\;\vert\;t_1,\ldots,t_{4(i-1)m},\lnot D'_i]$. We shall do this as follows. Let $j\in \{4(i-1)m+1,\ldots,(4i-3)m\}$. Let $s=4m(i-1)$ and let
$r=t_s$.
Since $D_i'$ does not hold, 
$r\le 5n/8$. Let $z=t_{j+2m}$. If we condition on
$r$, then the expected value of $z$ is
$r+(j-s+2m)(n-r)/(n-s)\le r+6m$. Since $z\ge r$,
it follows from Markov's inequality that
$z\le r+36m\le 2n/3$ with probability at least $5/6$.

If $m=1$, then the lower bound on the conditional probability
follows from Lemma~\ref{quadruple-poisson}.
In the other case where $m\ge 10^6$, by Lemma~\ref{quadruple-clt},  
the expected number of $j\in\{s+1,\ldots,(4i-3)m\}$
for which
the distance from
$\alpha(g_A(j)-g_A(j+m)-g_A(j+2m)+g_A(j+3m))$
to the nearest integer is at least $|\alpha|\sqrt{m/6}$
is at least $m/10$. If follows that with probability
at least $1/20$ this property holds for at least $m/20$
values of $j$. Overall, the conditional probability
of $E_i$ is at least $5/6\cdot 1/20=1/24$.
\end{proof}

\begin{corollary} \label{smalloutsidebox}
Let $A$ be a random die and let $\hat f$ be the characteristic function of the random variable $(U,V)$, which is uniformly distributed on the set $\{(g_A(j),j-(n+1)/2):j\in[n]\}$. Then, for sufficiently large $n$, with probability at least $1-n^{-5}$ over the choice of $A$ we have the bound
\[|\hat f(\a,\be)|^n\leq n^{-10}\]
for every $(\a,\be)\in[-1/2,1/2]^2$ such that either $|\a|\geq 10^7\log n/n$ or $|\be|\geq 10^9(\log n/n)^{3/2}$.
\end{corollary}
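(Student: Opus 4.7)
The plan is to reduce Corollary~\ref{smalloutsidebox} to Lemma~\ref{fcbound} by (i) discretizing the range of $\alpha$ in the regime where Lemma~\ref{fcbound} applies directly and extending the bound to all $\alpha$ via a Lipschitz estimate, and (ii) handling the residual small-$\alpha$/large-$\beta$ regime by bounding $\hat f(\alpha,\beta)$ via its proximity to the explicit geometric sum $\hat f(0,\beta)$.

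In the regime $|\alpha|\geq 10^7\log n/n$, Lemma~\ref{fcbound} provides, for each fixed $\alpha$, a bound $|\hat f(\alpha,\beta)|\leq 1-\delta(\alpha)$ valid for every $\beta$, with failure probability at most $n^{-10}$, where $\delta(\alpha)=\min(10^{-13},10^{-11}\alpha^2 n/\log n)$. A direct check shows $n\delta(\alpha)\geq 10\log n$ throughout (the threshold $|\alpha|\geq 10^7\log n/n$ forces the second branch of the minimum to exceed $10^3\log n/n$), so $|\hat f(\alpha,\beta)|^n\leq e^{-n\delta(\alpha)}\leq n^{-10}$. To make this uniform in $\alpha$, I would take an $\eta$-net of $\alpha$-values in $[-1/2,1/2]$ with $\eta=n^{-3/2}$, apply Lemma~\ref{fcbound} at each net point, and union-bound. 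The trivial Lipschitz bound $|\partial_\alpha\hat f|\leq 2\pi\max_j|g_A(j)|$ combined with Lemma~\ref{maxnorm} (failure probability $8n^{-11/2}$) gives perturbations of size $\eta\cdot 4\pi\sqrt{n\log n}=O(\sqrt{\log n}/n)$, which is comfortably smaller than $\delta(\alpha)\geq 10\log n/n$. The total failure probability is $O(n^{3/2}\cdot n^{-10})+O(n^{-11/2})\ll n^{-5}$.

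In the residual regime $|\alpha|<10^7\log n/n$ and $|\beta|\geq 10^9(\log n/n)^{3/2}$, Lemma~\ref{fcbound} gives no useful bound, so I would compare $\hat f(\alpha,\beta)$ directly to $\hat f(0,\beta)=n^{-1}\sum_{j=1}^n e(\beta(j-(n+1)/2))$. The elementary estimate $|\sin(\pi\beta)|\geq 2|\beta|$ for $|\beta|\leq 1/2$ gives $|\hat f(0,\beta)|\leq 1/(2n|\beta|)=O(n^{-1/2}(\log n)^{-3/2})$. For the perturbation, $|\hat f(\alpha,\beta)-\hat f(0,\beta)|\leq 2\pi|\alpha|\max_j|g_A(j)|=O((\log n)^{3/2}/\sqrt n)$ by Lemma~\ref{maxnorm}. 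Thus for $n$ sufficiently large, $|\hat f(\alpha,\beta)|\leq 1/2$, so $|\hat f(\alpha,\beta)|^n\leq 2^{-n}\ll n^{-10}$, with failure probability $O(n^{-11/2})$ again inherited from Lemma~\ref{maxnorm}.

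The main obstacle is the quantitative bookkeeping in Regime~1: one must verify that $\delta(\alpha)$ survives the Lipschitz perturbation across each net cell at the critical threshold $|\alpha|\approx 10^7\log n/n$ (where $\delta$ is smallest among admissible $\alpha$), while simultaneously keeping the net sparse enough that the per-point $n^{-10}$ failure sums below $n^{-5}$. Since the Lipschitz constant grows only as $\sqrt{n\log n}$ and $\delta(\alpha)\geq 10\log n/n$, the trade-off has ample slack, and the argument ultimately amounts to choosing constants carefully and piecing together the regimes.
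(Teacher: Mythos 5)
Your Regime 1 argument (discretizing $\alpha$ and extending via a Lipschitz estimate) matches the paper's approach and is sound, though note that the paper uses the slightly finer grid $\alpha_i = i/2n^2$ and chooses the net point with $|\alpha_i|\ge|\alpha|$ rather than the nearest one; either variant works given the ample slack you identify.

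Regime 2, however, contains a genuine gap. First, the arithmetic is wrong: with $|\beta|\ge 10^9(\log n/n)^{3/2}$ the bound $|\hat f(0,\beta)|\le 1/(2n|\beta|)$ evaluates to $O(\sqrt n/(\log n)^{3/2})$, not $O(n^{-1/2}(\log n)^{-3/2})$ --- the threshold value of $|\beta|$ is far \emph{below} $1/n$, so $1/(2n|\beta|)\gg 1$ and the estimate is vacuous over the sub-range $10^9(\log n/n)^{3/2}\le|\beta|\lesssim 1/n$. Second, the approach cannot be rescued by sharpening the geometric-sum bound. The exact geometric-sum Taylor expansion gives $|\hat f(0,\beta)|\le 1-cn^2\beta^2$, but at the threshold $n^2\beta^2\asymp (\log n)^3/n$, while the perturbation you invoke, $2\pi|\alpha|\|U\|_\infty\asymp(\log n)^{3/2}/\sqrt n$, is asymptotically much larger than $(\log n)^3/n$. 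The triangle-inequality comparison $|\hat f(\alpha,\beta)|\le|\hat f(0,\beta)|+2\pi|\alpha|\|U\|_\infty$ therefore produces a bound greater than $1$ near the threshold, regardless of how sharply one estimates $\hat f(0,\beta)$. The key point you are missing is that the absorption of the $\alpha$-dependent error must happen at the \emph{linear} level, inside the phase, not at the level of $|\hat f|$ itself after the quadratic has been taken. The paper does this by pairing $j$ with $j+m$ (with $m\approx n/2$ when $|\beta|\le 1/(2n)$, and with $m\approx 1/(4|\beta|)$ for larger $|\beta|$): the phase increment across a pair is $m\beta-\alpha(g_A(j+m)-g_A(j))$, and $m|\beta|\gtrsim(\log n)^{3/2}/\sqrt n$ is comparable to, and dominates, the $\alpha$-term $\lesssim(\log n)^{3/2}/\sqrt n$, so after absorbing it one still has a phase of order $n|\beta|$, whose square multiplied by $n$ is $\gg\log n$. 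That amplification is lost once one passes to $|\hat f(0,\beta)|$ and tries to perturb.
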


\begin{proof}
We first need to address the fact that
the bound in
Lemma~\ref{fcbound} holds with high probability for a fixed
$\alpha$, whereas we need the bound to hold with high
probability for all $\alpha$. Let $\alpha_i=i/2n^2$
for $i\in\{-n^2,\ldots,n^2\}$. By a union bound, except with probability
$3n^{-8}$ over the choice of a random die, the bound
in Lemma~\ref{fcbound} holds for all values of
$\alpha_i$ and $\beta$. More precisely, for those
values we have that 
$|\hat{f}(\alpha_i,\beta)|
\le 1-\min\left(10^{-14},
10^{-12}\frac{\alpha^2 n}{\log n}\right)$,
which is less than $1-20\log n/n$ for all values of
$i$ such that $|\alpha_i|\ge 10^{7}\log n / n$.
Furthermore, by Lemma~\ref{maxnorm},
with probability at least $1-8n^{-11/2}$ the sampled die is such that  
$\|U\|_\infty\le 2\sqrt{n\log n}$. From now on we fix a die
which satisfies all those bounds. Note that indeed such a die
occurs with probability at least $1-n^{-5}$ for sufficiently large $n$.

Consider another $\alpha$ with absolute value
at least $10^7\log n/n$ and let $i$ be such that 
$|\alpha_i|\ge |\alpha|$ and the distance between $\alpha$
and $\alpha_i$ is minimized. In particular,
$|\alpha_i-\alpha|\le 1/2n^2$. If we show
$|\hat{f}(\alpha_i,\beta)-\hat{f}(\alpha,\beta)|\le 10\log n/n$,
then 
$|\hat{f}(\alpha,\beta)|\le 1-10\log n/n$ and
$|\hat{f}(\alpha,\beta)|^n\le(1-10\log n/n)^n\le n^{-10}$,
as claimed. Indeed, by the definition of characteristic function,
it follows that
\[
|\hat{f}(\alpha_i,\beta)-\hat{f}(\alpha,\beta)|
=\left|\E\left[e(\alpha_iU+\beta V)
\left(1-e((\alpha-\alpha_i)U)\right)
\right]\right|
\le\E|1-e((\alpha-\alpha_i)U)|\;.
\]
Since $|1-e(x)|^2=2(1-\cos2\pi x)\le 4\pi^2x^2$, it indeed follows
that 
$|\hat{f}(\alpha_i,\beta)-\hat{f}(\alpha,\beta)|\le 2\pi|\alpha_i-\alpha|\cdot\|U\|_\infty\le 10\log n/n$.

\medskip

Now suppose that $|\a|\leq 10^7 \log n/n$. 
Suppose first that $|\be|\leq n^{-1}/2$ and let $m=\lfloor n/2\rfloor$. For each $j\leq m$ let us obtain an upper bound for $|e(g_A(j)\a+j\be)+e(g_A(j+m)\a+(j+m)\be)|$. By our bound on $|\a|$, we have that $|g_A(j)\a|$ and $|g_A(j+m)\a|$ are both at most $2\cdot 10^7(\log n)^{3/2}/\sqrt n$. It follows that 
\[\big|g_A(j)\a+j\be-(g_A(j+m)\a+(j+m)\be)\big|\geq n|\be|/3-4\cdot 10^7(\log n)^{3/2}/\sqrt n.\]
Our lower bound for $|\be|$ implies that this is at least $n|\be|/6$. We also have that
\[\big|g_A(j)\a+j\be-(g_A(j+m)\a+(j+m)\be)\big|\leq n|\be|/2+4\cdot 10^7(\log n)^{3/2}/\sqrt n\leq n|\be|\leq 1/2.\]
The proof of Lemma \ref{quadruple} included the inequality that $|e(\theta)+e(\phi)|\leq 2(1-(\theta-\phi)^2)$ when $|\theta-\phi|\leq 1/2$, so using this and the two bounds just noted, we find that 
\[\big|e\big(g_A(j)\a+j\be\big)+e\big(g_A(j+m)\a+(j+m)\be\big)\big|\leq 2(1-n^2\be^2/36),\]
from which it follows that $|\hat f(\a,\be)|\leq 1-n^2\be^2/40$ when $n$ is sufficiently large. (The slight worsening of the absolute constant is to allow for the fact that $n$ may be odd, in which case we do not get an exact partition of $[n]$ into pairs $\{j,j+m\}$.) 

Using our lower bound on $|\be|$ again, we find that $n^2\be^2/40\geq \log^3 n/n$, from which it follows readily that $|\hat f(\a,\be)|^n\leq n^{-10}$.

\medskip

If $1/4\geq|\be|> n^{-1}/2$, then the proof is similar, but this time we choose $m$ maximal such that $m|\be|\leq 1/4$. Note that $m\leq n/2$ if we do this. It follows that we can partition $[n]$ into at least $n/3$ disjoint pairs $\{j,j+m\}$. For each such pair we have that 
\[\big|g_A(j)\a+j\be-(g_A(j+m)\a+(j+m)\be)\big|\geq m|\be|-4\cdot 10^7(\log n)^{3/2}/\sqrt n\geq 1/10\]
and also
\[\big|g_A(j)\a+j\be-(g_A(j+m)\a+(j+m)\be)\big|\leq m|\be|+4\cdot 10^7(\log n
)^{3/2}/\sqrt n\leq 1/3,\]
when $n$ is sufficiently large. It follows that 
\[\big|e(g_A(j)\a+j\be)+e(g_A(j+m)\a+(j+m)\be)\big|\leq 2(1-1/100),\]
and hence that $|\hat f(\a,\be)|\leq 298/300$, which for sufficiently large $n$ gives us that $|\hat f(\a,\be)|^n\leq n^{-10}$.

If $1/4<|\be|\leq 1/2$ then we can argue very similarly but taking $m=1$. We can bound $|\hat f(\a,\be)|$ away from 1 by an even better absolute constant and the required estimate holds with a lot of room to spare. 
\end{proof}

\section{A local central limit theorem for 
\texorpdfstring{$(U,V)$}{(U,V)}
}

At this point, we note that if we choose a random $n$-sided die $A$, then with probability $1-o(1)$ (where the $o(1)$ term is bounded by $2n^{-5}$) we have the conclusions of the main results of the previous section, Lemma \ref{maxnorm} and Corollary \ref{smalloutsidebox}. The first of these gives an upper bound for $\|U\|_\infty$ and the second shows that $|\hat f(\a,\be)|^n$ is small when $(\a,\be)$ lie outside a small box about the origin.

Given these properties, it is now reasonably easy to follow the standard Fourier method to prove a local central limit theorem for the sum of $n$ independent copies of $(U,V)$ that will be strong enough and explicit enough to enable us to prove our main result. So let us fix an $n$-sided die $A$ that has the properties, and let $(U,V)$ be the random variable defined earlier that we associate with $A$.

Let us briefly recall a few standard facts about characteristic functions. One is that if $\hat f$ is the characteristic function of $(U,V)$, then the characteristic function of the sum of $n$ independent copies of $(U,V)$ is $\hat f^n$. This follows from the convolution law in Fourier analysis (since if we regard $(U,V)$ as a function $f$ from $\Z^2$ to $\R$, then $\hat f$ is its Fourier transform, and the function corresponding to the distribution of the sum of $n$ independent copies of $(U,V)$ is the $n$-fold convolution of $f$). For similar reasons we have the inversion formula
\[\P[(U,V)=(x,y)]=\int_{\T^2}\hat f(\a,\be)e(-\a x-\be y)\,d\a\,d\be.\]

We shall also need to know that the characteristic function of $(U,V)$ relates in a simple way to its moments. We have that
\[\frac{\partial^{r+s}}{\partial^r\a\,\partial^s\be}\hat f(\a,\be)=(2\pi i)^{r+s}\E(U^rV^s e(\a U+\be V)),\]
and evaluating this at zero we get $(2\pi i)^{r+s}\E(U^rV^s)$.

We shall use the following estimate, which follows from Taylor's theorem and the observation about the partial derivatives. (We also use the fact that $(U,V)$ has mean $(0,0)$.)

\begin{lemma} \label{taylor}
Let $f$ be as above. Then 
\[\hat f(\a,\be)=1-2\pi^2(\a^2\E U^2+2\a\be\E UV+\be^2\E V^2)+R(\a,\be),\]
where $|R(\a,\be)|\leq\frac{4\pi^3}3(|\a|\cdot\|U\|_\infty+|\be|\cdot\|V\|_\infty)^3$.\hfill$\square$
\end{lemma}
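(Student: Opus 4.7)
The plan is to apply the Taylor expansion of the complex exponential pointwise inside the expectation, take expectations term by term, and control the remainder by the sup-norms of $U$ and $V$. Since the characteristic function is $\hat f(\a,\be) = \E e(\a U + \be V)$, this reduces to a Taylor expansion of the scalar function $t \mapsto e(t) = \exp(2\pi i t)$ applied to the random variable $t = \a U + \be V$, with the remainder estimated uniformly.

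More precisely, I would first record the scalar inequality
\[
\Bigl|e(t) - 1 - 2\pi i t - \tfrac{1}{2}(2\pi i t)^2\Bigr| \;\leq\; \frac{(2\pi)^3}{6}|t|^3 \;=\; \frac{4\pi^3}{3}|t|^3,
\]
which follows from the standard Taylor-with-integral-remainder bound for $\exp$ (the third derivative of $e(t)$ has modulus $(2\pi)^3$). Substituting $t = \a U + \be V$ and taking expectation gives
\[
\hat f(\a,\be) \;=\; 1 + 2\pi i\,\E(\a U + \be V) - 2\pi^2\,\E(\a U + \be V)^2 + R(\a,\be),
\]
with $|R(\a,\be)| \leq \tfrac{4\pi^3}{3}\E|\a U + \be V|^3$. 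The first-order term vanishes because $(U,V)$ has mean $(0,0)$, as computed in Section~\ref{rvs}. Expanding the second-order term yields exactly $-2\pi^2(\a^2\E U^2 + 2\a\be\E UV + \be^2\E V^2)$.

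It remains to bound the remainder in the stated form. Here I would just use the triangle inequality pointwise: $|\a U + \be V| \leq |\a|\cdot\|U\|_\infty + |\be|\cdot\|V\|_\infty$ almost surely, so
\[
\E|\a U + \be V|^3 \;\leq\; \bigl(|\a|\cdot\|U\|_\infty + |\be|\cdot\|V\|_\infty\bigr)^3,
\]
and multiplying by $\tfrac{4\pi^3}{3}$ gives the claimed bound on $R(\a,\be)$.

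There is no real obstacle here; the lemma is a textbook Taylor expansion of a characteristic function and the only thing to be careful about is the constant in the cubic remainder, for which it suffices to note that $(2\pi)^3/6 = 4\pi^3/3$. The interpretation of the partial-derivative formula for $\hat f$ recorded just before the lemma is consistent with this: the $\a^r\be^s$-coefficient in the Taylor expansion of $\hat f$ at the origin equals $(2\pi i)^{r+s}\E(U^r V^s)/(r!s!)$, so the zeroth-, first-, and second-order terms are as above, and the remainder obeys the stated cubic bound.
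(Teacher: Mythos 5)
Your proof is correct and is precisely what the paper has in mind: the paper simply asserts the lemma ``follows from Taylor's theorem and the observation about the partial derivatives'' without writing out the details, and your argument fills those in faithfully, including the correct identification of the cubic remainder constant $(2\pi)^3/3! = 4\pi^3/3$, the vanishing of the linear term via $\E U = \E V = 0$, and the almost-sure bound $|\a U + \be V| \leq |\a|\|U\|_\infty + |\be|\|V\|_\infty$ for the remainder.
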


From Lemma \ref{taylor}, we see that for small $\a,\be$, $\hat f(\a,\be)^n$ is approximately equal to $\exp(-nQ(\a,\be))$, where $Q(\a,\be)=2\pi^2(\a^2\E U^2+2\a\be\E UV+\be^2\E V^2)$, which is a positive semidefinite quadratic form in $\a$ and $\be$. Next, we prove two small technical lemmas in order to help us determine sufficient conditions for this approximation to be a good one.

\begin{lemma}\label{expestimate}
For every integer $n\ge 2$ and every pair of real numbers $x,y$ such that $x^2\leq 1/4n$ and $|y|\leq 1/4n$, we have the inequality 
\[\exp(-nx)\exp(-n(|y|+2x^2+2y^2))\leq (1-x+y)^n\leq\exp(-nx)\exp(n(|y|+2x^2+2y^2)).\]
It follows that the ratio of $(1-x+y)^n$ to $\exp(-nx)$ lies between $1-4n(|y|+x^2)$ and $1+4n(|y|+x^2)$.
\end{lemma}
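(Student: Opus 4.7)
The plan is to pass to logarithms and Taylor expand. Writing $(1-x+y)^n=\exp\bigl(n\log(1-x+y)\bigr)$, the first displayed inequality is equivalent to showing
\[\bigl|n\log(1-x+y)-(-nx)\bigr|\le n(|y|+2x^2+2y^2),\]
after which exponentiating both sides yields the claimed two-sided bound. I would obtain this exponent estimate by Taylor expanding $\log(1+z)$ around $z=0$ with $z=-x+y$ and then multiplying by $n$.

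First I would verify that $|z|=|-x+y|$ lies safely inside the radius of convergence. The hypotheses $x^2\le 1/(4n)$ and $|y|\le 1/(4n)$ combined with $n\ge 2$ give $|x|\le 1/\sqrt 8$ and $|y|\le 1/8$, so $|z|\le |x|+|y|<1/2$. In this range the geometric-series estimate
\[\bigl|\log(1+z)-z\bigr|\le\sum_{k\ge 2}\frac{|z|^k}{k}\le\frac{z^2}{2(1-|z|)}\le z^2\]
is valid. Substituting $z=-x+y$ and using $(x-y)^2\le 2x^2+2y^2$ yields $\bigl|\log(1-x+y)-(-x+y)\bigr|\le 2x^2+2y^2$. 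Multiplying by $n$ and applying the triangle inequality with $|ny|\le n|y|$ gives the desired bound on the exponent, and hence the first displayed inequality.

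For the concluding statement about the ratio, I would use that under the hypotheses $2ny^2\le|y|/2$, since $|y|\le 1/(4n)$ forces $2y^2\le|y|/(2n)$. Consequently $n(|y|+2x^2+2y^2)\le 2n(|y|+x^2)\le 1$, the last step again from the hypotheses ($2n|y|\le 1/2$ and $2nx^2\le 1/2$). The standard bound $|e^t-1|\le 2|t|$ valid for $|t|\le 1$ then converts the exponential sandwich into the additive sandwich $1\pm 4n(|y|+x^2)$ on the ratio. No real obstacle arises: the argument is straightforward bookkeeping on the Taylor remainder for $\log$, together with checking that the numerical constants in the hypotheses are tight enough to keep the approximation $\log(1+z)\approx z$ accurate and keep the resulting exponent in the linear regime of $\exp$.
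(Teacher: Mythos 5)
Your proposal is correct and follows essentially the same route as the paper's proof: both pass to logarithms, bound the Taylor remainder of $\log$ by a quadratic (the paper writes it as $-u-u^2\le\log(1-u)\le -u+u^2$ with $u=x-y$; you write it as $|\log(1+z)-z|\le z^2$ with $z=-x+y$), use $(x-y)^2\le 2x^2+2y^2$, and then linearize the exponential via a bound of the form $|e^t-1|\le 2|t|$ on $|t|\le 1$ to get the additive sandwich on the ratio.
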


\begin{proof}
For every $u$ with $|u|\leq 1/2$, we have the inequality $-u-u^2\leq \log(1-u)\leq-u+u^2$, which implies that $\exp(-n(u+u^2))\leq(1-u)^n\leq\exp(-n(u-u^2))$. Applying this with $u=x-y$ and noting that $u^2\leq 2(x^2+y^2)$, we obtain the first pair of inequalities.

Also, since $nx^2$ and $n|y|$ are both at most $1/4$, $n(|y|+2x^2+2y^2)\leq
2n(|y|+x^2)\leq 1$. But when $|w|\leq 1$ we have $1-2|w|\leq e^{-w}\leq 1+2|w|$, and the statement about the ratios  follows.
\end{proof}

\begin{lemma}\label{expestimatecomplex}
For every integer $n\ge 2$, every real
number $x$ and every complex number $y$
such that $x^2\le 1/80n$ and $|y|\le 1/80n$, we have
the inequalities
\[
\left|1-\frac{(1-x+y)^n}{\exp(-nx)}\right|,
\left|1-\frac{\exp(-nx)}{(1-x+y)^n}\right|
\le40n(|y|+x^2)\;.
\]
\end{lemma}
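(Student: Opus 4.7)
The plan is to write $\tfrac{(1-x+y)^n}{\exp(-nx)}=\exp(z)$, where $z=nx+n\log(1-x+y)$ with the principal branch of $\log$, bound $|z|$ in terms of $n(|y|+x^2)$, and then apply the elementary inequality $|e^z-1|\le |z|e^{|z|}$. The reciprocal bound is handled symmetrically by replacing $z$ with $-z$.

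First I would set $u=x-y$, a complex number, and check that the hypotheses give $|u|\le |x|+|y|\le \tfrac{1}{2\sqrt{n}}+\tfrac{1}{8n}\le \tfrac12$ for $n\ge 2$. This is inside the disk of convergence of $\log(1-u)=-\sum_{k\ge 1}u^k/k$, so substituting gives
\[
z \;=\; nx + n\log(1-u) \;=\; ny - \sum_{k\ge 2}\frac{nu^k}{k}.
\]
The tail is bounded by a geometric series: $\sum_{k\ge 2}|u|^k/k \le \tfrac12|u|^2\sum_{k\ge 0}|u|^k=|u|^2/(2(1-|u|))\le |u|^2$ since $|u|\le \tfrac12$. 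Hence $|z|\le n|y|+n|u|^2$.

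Next I would combine $|u|^2\le 2(x^2+|y|^2)$ with the estimate $|y|^2\le |y|/(8n)$ (which follows from $|y|\le 1/(8n)$) to get
\[
n|u|^2 \;\le\; 2nx^2 + 2n|y|^2 \;\le\; 2nx^2 + |y|/4 \;\le\; 2nx^2 + n|y|/2,
\]
using $n\ge 2\ge 1/2$ in the last step. Therefore $|z|\le \tfrac32 n|y|+2nx^2\le 2n(|y|+x^2)$. By the hypotheses, $|z|\le 2(\tfrac18+\tfrac14)=\tfrac34$, so $|e^z-1|\le |z|e^{|z|}\le 2n(|y|+x^2)\cdot e^{3/4}\le 40n(|y|+x^2)$. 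The same argument applied to $-z$ gives the bound for $\bigl|1-\exp(-nx)/(1-x+y)^n\bigr|$.

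There is no serious obstacle: this is essentially a routine upgrade of Lemma~\ref{expestimate} from real to complex $y$. The only point that requires any care is verifying that the Taylor expansion of $\log(1-u)$ is valid in the complex disk $|u|\le 1/2$ and that the constants survive when one replaces a real AM/GM step by $|u|^2\le 2(x^2+|y|^2)$; the loose constant $40$ in the statement leaves ample room.
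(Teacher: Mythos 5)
Your proof is correct and takes a genuinely different route from the paper's. The paper writes $y=\alpha+i\beta$, expands $L=(1-x+y)^n$ by the binomial theorem, bounds $|\Im L|$ and $|\Re L - (1-x)^n|$ by $(1-x+|\alpha|+|\beta|)^n-(1-x)^n$, and then invokes the real Lemma~\ref{expestimate} twice to control the real and imaginary parts of $1-L/\exp(-nx)$ separately, obtaining $20n(|y|+x^2)$, after which the reciprocal bound is deduced by a perturbation of the denominator. You instead pass to the principal logarithm, writing $(1-x+y)^n/\exp(-nx)=e^z$ with $z=ny-n\sum_{k\ge 2}(x-y)^k/k$, bound $|z|\le 2n(|y|+x^2)\le 3/4$ directly from the geometric tail estimate, and apply $|e^{\pm z}-1|\le |z|e^{|z|}$ so that both inequalities come out symmetrically in one step. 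Besides being shorter, this yields the constant $2e^{3/4}\approx 4.2$ in place of $40$, and it avoids a delicate point in the paper's derivation: the paper's final inequality $\frac{20n(|y|+x^2)}{1-20n(|y|+x^2)}\le 40n(|y|+x^2)$ needs $20n(|y|+x^2)\le 1/2$, whereas the stated hypotheses $x^2\le 1/4n$ and $|y|\le 1/8n$ only guarantee $n(|y|+x^2)\le 3/8$. Your argument proves the lemma as stated with room to spare, so you should feel free to prefer it.
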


\begin{proof}
Let us write $y=\alpha+\beta i$ and let
$L=(1-x+y)^n$ and $z=1-L/\exp(-nx)$. We proceed to estimate
the imaginary part of $z$:
\begin{align*}
\left|\Im L\right|
&=\left|\Im\sum_{k=0}^n\binom{n}{k}(1-x)^{n-k}
\sum_{j=0}^k\binom{k}{j}\alpha^{k-j}(\beta i)^j\right|
\le\sum_{k=1}^n\binom{n}{k}(1-x)^{n-k}
\sum_{j=0}^k\binom{k}{j}|\alpha|^{k-j}|\beta|^j\\
&=(1-x+|\alpha|+|\beta|)^n-(1-x)^n\;,
\end{align*}
Applying Lemma~\ref{expestimate}
to $(1-x+|\alpha|+|\beta|)^n$ and $(1-x)^n$,
we obtain
\[
|\Im z|
=\frac{|\Im L|}{\exp(-nx)}
\le\frac{(1-x+|\alpha|+|\beta|)^n}{\exp(-nx)}
-\frac{(1-x)^n}{\exp(-nx)}
\le 1+4n(2|y|+x^2)-1+4nx^2=8n(|y|+x^2)\;.
\]
By a similar argument, 
the real part of $L$ is bounded by
$\left|\Re L-(1-x)^n\right|
    \le(1-x+|\alpha|+|\beta|)^n-(1-x)^n$
and
\begin{align*}
    \left|\Re z\right|
    &=\left|
    1-\frac{(1-x)^n}{\exp(-nx)}
    + \frac{(1-x)^n-\Re L}{\exp(-nx)}
    \right|
    \nonumber\\
    &\le\left|1-\frac{(1-x)^n}{\exp(-nx)}
    \right|+\frac{(1-x+|\alpha|+|\beta|)^n}{\exp(-nx)}
    -\frac{(1-x)^n}{\exp(-nx)}\le 12n(|y|+x^2)\;.
\end{align*}
Together, the bounds on the real and imaginary parts of $z$ imply
\begin{align*}
    \left|1-\frac{(1-x+y)^n}{\exp(-nx)}\right|
    \le 20n(|y|+x^2)\;.
\end{align*}
Finally, the inverse inequality is obtained with
\begin{align*}
    \left|1-\frac{\exp(-nx)}{(1-x+y)^n}\right|
    &=|z|\frac{\exp(-nx)}{|1-x+y|^n}
    \le\frac{20n(|y|+x^2)}{1-20n(|y|+x^2)}
    \le 40n(|y|+x^2)\;.\qedhere
\end{align*}
\end{proof}

Combining the above lemmas with Corollary \ref{smalloutsidebox}, we obtain the following result, which tells us that in a suitable sense ${\hat f}^n$ is approximated by a Gaussian.

\begin{lemma}\label{gaussianapproximation}
Let $\hat f$ be the characteristic function of $(U,V)$. Define a function $h:\R^2\to\R$ by setting $h(\a,\be)=\hat f(\a,\be)^n$ when $|\a|\leq 1/2$ and $|\be|\leq 1/2$ and $h(\a,\be)=0$ otherwise. And let $g$ be the two-dimensional Gaussian
\[g(\a,\be)=\exp(-2\pi^2n(\a^2\E U^2+2\a\be\E UV+\be^2\E V^2)).\]
Then $\|h-g\|_1\leq 10^{50}(\log n)^7/n^3$.
\end{lemma}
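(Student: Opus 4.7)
The plan is to split the integration $\|h-g\|_1=\int_{\R^2}|h-g|$ into three regions matching the control we have on $\hat f$: a small Taylor box $B_T=[-\a_0,\a_0]\times[-\be_0,\be_0]$ with $\a_0=10^7\log n/n$ and $\be_0=10^9(\log n/n)^{3/2}$ (chosen to match Corollary~\ref{smalloutsidebox}); a middle annulus $[-1/2,1/2]^2\setminus B_T$; and the tail $\R^2\setminus[-1/2,1/2]^2$. On the tail, $h\equiv 0$, so only $\int g$ needs to be controlled; on the middle annulus, Corollary~\ref{smalloutsidebox} gives $|h|\leq n^{-10}$ pointwise, so the contribution from $h$ is negligible, and we again need only bound $\int g$. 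The bulk of the error will come from $B_T$.

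On $B_T$, apply Lemma~\ref{taylor} to write $\hat f(\a,\be)=1-x+y$, where $x(\a,\be)=2\pi^2(\a^2\E U^2+2\a\be\E UV+\be^2\E V^2)$ (so that $g=e^{-nx}$) and $y(\a,\be)=R(\a,\be)$ is the cubic Taylor remainder. Using $\|U\|_\infty\leq 2\sqrt{n\log n}$ from Lemma~\ref{maxnorm} and the trivial $\|V\|_\infty\leq n/2$, one checks that inside $B_T$,
\[
|x|=O\bigl((\log n)^3/n\bigr),\qquad |y|=O\bigl((\log n)^{9/2}/n^{3/2}\bigr),
\]
both comfortably below the thresholds $1/(4n)$ and $1/(8n)$ demanded by Lemma~\ref{expestimatecomplex}. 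That lemma then yields the pointwise estimate $|\hat f^n-g|\leq 40n(|y|+x^2)\,g$, with the prefactor bounded uniformly on $B_T$ by $O((\log n)^{9/2}/\sqrt n)$. Integrating and extending the $g$-integral to all of $\R^2$ gives
\[
\int_{B_T}|h-g|\;\leq\;O\bigl((\log n)^{9/2}/\sqrt n\bigr)\cdot\int_{\R^2}g.
\]

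To estimate $\int_{\R^2}g$ and the tail contributions simultaneously, use the Gaussian formula $\int_{\R^2}g=(2\pi n)^{-1}(\det C)^{-1/2}$, where $C$ is the covariance matrix of $(U,V)$. For a die satisfying the high-probability conclusions of the previous section one expects $\E V^2=(n^2-1)/12$ (deterministic), $\E U^2=\Theta(n)$, and the correlation of $U,V$ bounded away from $\pm 1$, giving $\det C=\Theta(n^3)$ and $\int_{\R^2}g=O(n^{-5/2})$. Multiplying by the $B_T$ prefactor yields $\int_{B_T}|h-g|=O((\log n)^{9/2}/n^3)$, well within the target. The same lower bounds on the eigenvalues of $C$ give that $nx(\a,\be)\gtrsim(\log n)^2$ on $\partial B_T$, so $g\leq\exp(-c(\log n)^2)$ throughout the middle annulus and the tail, making $\int_{[-1/2,1/2]^2\setminus B_T}g$ and $\int_{\R^2\setminus[-1/2,1/2]^2}g$ super-polynomially small. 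Combined with $\int|h|\leq n^{-10}$ on the annulus, the two outer regions contribute negligibly.

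The main obstacle is the implicit need for quantitative lower bounds on $\E U^2$ and on $\det C=\E U^2\E V^2-(\E UV)^2$: without them, the Gaussian $g$ is not narrow enough to make $\int g=O(n^{-5/2})$ or to decay by a polylog-exponential on $\partial B_T$, and the sharp pointwise bound $|\hat f^n-g|\leq 40n(|y|+x^2)g$ on $B_T$ cannot be integrated to anything better than the box area, which is far too large. Everything else is routine bookkeeping; the gap between the clean bound $(\log n)^{9/2}/n^3$ produced above and the stated $10^{50}(\log n)^7/n^3$ is absorbed into the absolute constant with room to spare.
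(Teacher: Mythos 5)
Your decomposition of $\R^2$ into a Taylor box $B_T$, a middle region, and the tail matches the paper's, and your use of Lemmas~\ref{taylor} and~\ref{expestimatecomplex} on $B_T$ and of Corollary~\ref{smalloutsidebox} for $|h|\le n^{-10}$ outside $B_T$ is exactly right. The genuine gap is the one you flag yourself: you need quantitative lower bounds on $\E U^2$ and on $\det C$ to conclude both that $\int_{\R^2}g=O(n^{-5/2})$ and that $g$ is already super-polynomially small on $\partial B_T$. Nothing in Section 5 supplies such bounds --- Lemma~\ref{maxnorm} is an \emph{upper} bound on $|U|$, and Corollary~\ref{smalloutsidebox} controls $|\hat f|^n$, not the moments --- so as written the exterior estimate does not close. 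Moreover, your fallback for the interior (``the box-area bound is far too large'') is wrong: using the other direction of Lemma~\ref{expestimatecomplex}, namely $|1-g/\hat f^n|\le 40n(|R|+Q^2)$, and the trivial $|\hat f^n|\le 1$, gives $|h-g|\le 40n(|R|+Q^2)$ on $B_T$ with no factor of $g$. The prefactor there is at most $\approx10^{32}(\log n)^{9/2}/\sqrt n$ and the area of $B_T$ is $4\cdot10^{16}(\log n/n)^{5/2}$, whose product is $\approx4\cdot10^{48}(\log n)^7/n^3$ --- well inside the target $10^{50}(\log n)^7/n^3$. So the interior needs no covariance information at all.

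The paper closes the exterior estimate without any moment lower bound by an observation you can adopt: on $\partial B_T$ one has $|\hat f|^n\le n^{-10}$ (Corollary~\ref{smalloutsidebox}) and, from the same Lemma~\ref{expestimatecomplex} ratio estimate, $g\le|\hat f^n|(1+40n(|R|+Q^2))\le 20n^{-10}$. Since $g$ is a centred Gaussian, for any fixed $(\a,\be)\in\partial B_T$ the function $t\mapsto g(t\a,t\be)$ equals $g(\a,\be)^{t^2}\le\exp(-10t^2\log n)$ for $t\ge 1$, and integrating this in polar-like coordinates over $\R^2\setminus B_T$ gives a contribution at most $n^{-12}$. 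Thus the decay of $g$ off the box is inherited directly from the decay of $\hat f^n$ there, and the moment lower bounds you correctly identify as missing are never needed. Replacing your $\det C$ computation by this argument (and swapping your interior estimate for the box-area one) turns your sketch into the paper's proof.
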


\begin{proof}
Since the conclusion of Lemma \ref{maxnorm}, is satisfied, we have $\|U\|_\infty\leq 2\sqrt{n\log n}$. We also have that $\|V\|_\infty\leq n$ for trivial reasons.

It follows that for any $\a$ and $\be$ we have
\[2\pi^2(\a^2\E U^2+2\a\be\E UV+\be^2\E V^2)\leq 2\pi^2(2\a\sqrt{n\log n}+\be n)^2\leq 160\a^2n\log n+40\be^2n^2.\]
Also, in Lemma \ref{taylor} we have that 
\[|R(\a,\be)|\leq 50(2|\a|\sqrt{n\log n}+|\be|n)^3\leq 200(8|\a|^3(n\log n)^{3/2}+|\be|^3n^3)\leq 2^{11}|\a|^3(n\log n)^{3/2}+2^8|\be|^3n^3.\]
Setting $Q(\a,\be)=2\pi^2(\a^2\E U^2+2\a\be\E UV+\be^2\E V^2)$, so $g(\a,\be)=\exp(-nQ(\a,\be))$, we have that
\[\hat f(\a,\be)=1-Q(\a,\be)+R(\a,\be),\]
and therefore by Lemma \ref{expestimatecomplex} that 
\[
\left|1-\frac{\hat{f}(\alpha,\beta)^n}{g(\alpha,\beta)}\right|,
\left|1-\frac{g(\alpha,\beta)}{\hat{f}(\alpha,\beta)^n}\right|
\le40n(|R(\alpha,\beta)|+Q(\alpha,\beta)^2)\;,
\]
provided that $Q(\a,\be)\leq 1/4\sqrt{5n}$ and $|R(\a,\be)|\leq 1/80n$.

For $Q(\a,\be)$ to be at most $1/4\sqrt 5n$ it is enough if $160\a^2n\log n\leq 1/8\sqrt{5n}$ and $40\be^2n^2\leq 1/8\sqrt {5n}$, and for that it is enough if $|\a|\leq 1/64n^{3/4}\!\!\sqrt{\log n}$ and $|\be|\leq 1/32n^{5/4}$. For $R(\a,\be)$ to be at most $1/80n$ it is enough if $2^{11}|\a|^3(n\log n)^{3/2}\leq 1/160n$ and $2^8|\be|^3n^3\leq 1/160n$, and for that it is enough if $|\a|\leq 1/128 n^{5/6}\!\!\sqrt{\log n}$ and $|\be|\leq 1/64 n^{4/3}$. The second pair of conditions is more stringent, so provided that they hold, we have the required bounds on $Q(\a,\be)$ and $R(\a,\be)$. 

To bound $\|h-g\|_1$, we shall look at the contribution within a small box and outside it. Corollary \ref{smalloutsidebox} tells us that $|\hat f(\a,\be)|^n\leq n^{-10}$ when $|\a|\geq 10^7\log n/n$ or $|\be|\geq 10^9(\log n/n)^{3/2}$. For sufficiently large $n$, $10^7\log n/n\leq 1/200n^{5/6}\sqrt{\log n}$ and $10^9(\log n/n)^{3/2}\leq 1/16n^{4/3}$. It follows that on the boundary of the box $B=[-10^7\log n/n,10^7\log n/n]\times[-10^9(\log n/n)^{3/2},10^9(\log n/n)^{3/2}]$ we have that $|\hat f(\alpha,\beta)|^{n}\le n^{-10}$
and $g(\a,\be)\leq n^{-10}(1+40n(|R(\a,\be)|+Q(\a,\be)^2))$. For sufficiently large $n$ this implies that $g(\a,\be)\leq 20n^{-10}$ everywhere on the boundary. 

We shall now estimate the integral of $g(\a,\be)$ outside the box $B$. Note that for each $(\a,\be)$ on the boundary of $B$, we have that $g(t\a,t\be)$ is a Gaussian in $t$ that takes the value at most $n^{-10}$ when $t=1$. Therefore, it is bounded above by $\exp(-10t^2\log n)$. 
The area of $B$ is at most $4\cdot 10^{16}(\log n/n)^{5/2}$.
Performing the appropriate substitution, we see that
(again assuming that $n$ is sufficiently large)
\[\int_{(\a,\be)\notin B}g(\a,\be)\mathop{d\a}\mathop{d\be}
\le\frac{8\cdot 10^{16}(\log n)^{5/2}}{n^{5/2}}
\int_1^\infty t\exp(-10t^2\log n)\mathop{dt}
\le n^{-12}\;.
\]
Since $h=0$ outside $[-1/2,1/2]^2$ and $|h|\leq n^{-10}$ outside $B$, we also have that 
\[\int_{(\a,\be)\notin B}|h(\a,\be)|\mathop{d\a}\mathop{d\be}\leq n^{-10}.\]
For all $(\a,\be)$ we have that $|f(\a,\be)^n|\leq 1$. Therefore, our estimate for the ratio of $g$ to $f$ implies that for all $(\a,\be)\in B$ we have that 
\[|g(\a,\be)-\hat f(\a,\be)^n|\leq 40n(|R(\a,\be)|+Q(\a,\be)^2).\]
Recall that we have previously estimated
$Q(\alpha,\beta)\le 160\alpha^2 n\log n+40\beta^2n^2$
and
$|R(\alpha,\beta)|\le 2^{11}|\alpha|^3(n \log n)^{3/2}
+2^8|\beta|^3n^3$.
Applying these bounds inside $B$ one can check that
$Q(\a,\be)\leq 10^{20}(\log n)^3/n$ for $n$ sufficiently large, and hence that $Q(\a,\be)^2\leq 10^{40}(\log n)^6/n^2$. Also, $|R(\a,\be)|\leq 10^{30}(\log n)^{9/2}/n^{3/2}$. So, for sufficiently large $n$, we have the bound
\[|g(\a,\be)-\hat f(\a,\be)^n|\leq 10^{32}(\log n)^{9/2}/\sqrt{n}\]
everywhere in $B$. Since $B$ has area at most $4\cdot 10^{16}(\log n/n)^{5/2}$, it follows that the contribution to $\|g-h\|_1$ from inside $B$ is at most $10^{49}(\log n)^7/n^3$.

Combining these estimates gives us the bound stated (as always, assuming that $n$ is sufficiently large).
\end{proof}



In the statement of the next result, 
we shall write $(U^{*n}, V^{*n})$ for
$(\sum_{i=1}^n U_i,\sum_{i=1}^n V_i)$, where
each $(U_i,V_i)$ is independently and identically
distributed according to the distribution of
$(U,V)$.
We also refer to a ``discrete Gaussian". By this we mean a 
function defined on $\mathbb Z^2$ with a formula of the form $f(x,y)=c\exp(-\lambda q(x,y))$ for some positive semidefinite quadratic form $q$.

\begin{corollary} \label{discretegaussian}
There is a discrete Gaussian $G$ such that
\[\big|\P\big[(U^{*n},V^{*n})=(x,y)\big]-G(x,y)\big|\leq 10^{50}(\log n)^7/n^3\]
for every $(x,y)\in\mathbb Z^2$. Furthermore, $\P[(U^{*n},V^{*n})=(0,0)]\leq 10^{17}(\log n/n)^{5/2}$.
\end{corollary}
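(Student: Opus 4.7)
The plan is to deduce both parts by applying Fourier inversion (stated just after Lemma~\ref{taylor}) to the functions $h$ and $g$ of Lemma~\ref{gaussianapproximation}. Since $\hat f^n$ is the characteristic function of $(U^{*n},V^{*n})$, the inversion formula gives
\[
\P\!\left[(U^{*n},V^{*n})=(x,y)\right]=\int_{[-1/2,1/2]^2}\hat f(\a,\be)^n e(-\a x-\be y)\,d\a\,d\be=\int_{\R^2}h(\a,\be)\,e(-\a x-\be y)\,d\a\,d\be,
\]
the second equality because $h$ extends $\hat f^n$ by zero outside the unit box. I would then set
\[
G(x,y):=\int_{\R^2}g(\a,\be)\,e(-\a x-\be y)\,d\a\,d\be,
\]
which is the Fourier transform of a two-dimensional Gaussian and hence, by the standard formula, equals $\frac{1}{2\pi n\sqrt{\det\Sigma}}\exp\!\bigl(-\tfrac{1}{2n}(x,y)\Sigma^{-1}(x,y)^{T}\bigr)$, where $\Sigma$ is the covariance matrix of $(U,V)$. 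Its restriction to $\Z^2$ is therefore of the form $c\exp(-\lambda q(x,y))$ with $q$ positive semidefinite, i.e.\ a discrete Gaussian in the sense defined. Subtracting and bringing the absolute value inside the integral,
\[
\bigl|\P[(U^{*n},V^{*n})=(x,y)]-G(x,y)\bigr|\le\int_{\R^2}|h-g|=\|h-g\|_1\le 10^{50}(\log n)^7/n^3,
\]
by Lemma~\ref{gaussianapproximation}. This settles the first part.

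For the bound at the origin, I would specialise the inversion formula to $\P[(U^{*n},V^{*n})=(0,0)]=\int_{\R^2}h(\a,\be)\,d\a\,d\be$ and split the integral along the small box $B=[-10^7\log n/n,10^7\log n/n]\times[-10^9(\log n/n)^{3/2},10^9(\log n/n)^{3/2}]$ used in the proof of Lemma~\ref{gaussianapproximation}. Outside $B$, Corollary~\ref{smalloutsidebox} gives $|h|=|\hat f|^n\le n^{-10}$ on $[-1/2,1/2]^2$ and $h=0$ further out, so the contribution from $\R^2\setminus B$ is at most $n^{-10}$. Inside $B$, I would write $h=g+(h-g)$ and exploit the crude pointwise bound $g(\a,\be)\le g(0,0)=1$, together with the explicit area estimate $|B|\le 4\cdot 10^{16}(\log n/n)^{5/2}$ already computed inside the proof of Lemma~\ref{gaussianapproximation}, to get $\int_B g\le|B|\le 4\cdot 10^{16}(\log n/n)^{5/2}$, while $\int_B|h-g|\le\|h-g\|_1\le 10^{50}(\log n)^7/n^3$.

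Combining the three contributions yields
\[
\P[(U^{*n},V^{*n})=(0,0)]\le 4\cdot 10^{16}(\log n/n)^{5/2}+10^{50}(\log n)^7/n^3+n^{-10},
\]
and since the last two terms are of smaller order in $n$ than the first, this is at most $10^{17}(\log n/n)^{5/2}$ for $n$ sufficiently large. I do not anticipate a genuine obstacle: the substantial Fourier-analytic work has already been done inside Lemma~\ref{gaussianapproximation} and Corollary~\ref{smalloutsidebox}, and the corollary is essentially a packaging step in which Fourier inversion converts the $L^1$ bound for $h-g$ into a pointwise bound on the probability mass function, while the trivial bound $g\le 1$ on the support box $B$ converts the mass of $g$ into the desired bound at the origin.
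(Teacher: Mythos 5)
Your proof is correct and follows the paper's proof essentially step for step: Fourier inversion plus the $L^1$ bound from Lemma~\ref{gaussianapproximation} for the first part, and the area of $B$ plus the integral of $h$ outside $B$ for the second. The only small deviation is in bounding $\int_B h$: the paper uses the trivial fact $|h|=|\hat f|^n\le 1$ directly, whereas you pass through $g$ via $\int_B h\le\int_B g+\int_B|h-g|\le|B|+\|h-g\|_1$, a slightly longer route to the same final estimate.
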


\begin{proof}
We have
\[\P[(U^{*n},V^{*n})=(x,y)]=\int_{\mathbb T^2}\hat f(\a,\be)^ne(-\a x-\be y)\mathop{d\a}\mathop{d\be}=\int_{\R^2}h(\a,\be)e(-\a x-\be y)\mathop{d\a}\mathop{d\be}.\]
By Lemma \ref{gaussianapproximation} and the fact that $e(-\a x-\be y)$ has modulus 1 for every $x,y$, if we replace $h$ by $g$ on the right-hand side, the difference to the integral is at most $10^{50}(\log n)^7/n^3$. But $\int_{\R^2}g(\a,\be)e(-\a x-\be y)\mathop{d\a}\mathop{d\be}$ has a Gaussian dependence on $(x,y)$, since it is the Fourier transform of a Gaussian (the Gaussian being defined on $\R^2$ even if we are evaluating its Fourier transform at points of $\Z^2$). This proves the first part.

For the second part, recall from the proof of Lemma \ref{gaussianapproximation} that the integral of $h$ outside the box $B$ is at most $n^{-10}$ and that $B$ has area at most $4\cdot 10^{16}(\log n/n)^{5/2}$. It follows that \[\P[(U^{*n},V^{*n})=(0,0)]=\int_{\R^2}h(\a,\be)\mathop{d\a}\mathop{d\be}\leq 10^{17}(\log n/n)^{5/2}.\] 
when $n$ is sufficiently large. (Indeed, this is a bound for $\P[(U^{*n},V^{*n})=(x,y)]$ for all $(x,y)$.)
\end{proof}

\section{The main theorems}

As before, we consider a fixed die $A$ that satisfies the conclusions
of Lemma~\ref{maxnorm} and Corollary~\ref{smalloutsidebox}.
Recall that $\P[V^{*n}=0]\ge n^{-3/2}/4$, by Lemma~\ref{probzero}.
Hence, by Corollary~\ref{discretegaussian},
the probability that $A$ ties with a random die $B$,
which is equal to $\P[U^{*n}=0\;\vert\;V^{*n}=0]$, is at most
$10^{18}(\log n)^{5/2}/n$. Since this conclusion holds for $1-o(1)$
fraction of dice $A$, we have directly established Theorem~\ref{ties-thm}.

We are also almost ready to prove Theorem~\ref{transitive-thm}. However, a uniform bound on the probabilities is not quite enough for our purposes. As is customary, we need to combine it with tail estimates. However, this is straightforward.

\begin{lemma}\label{tail}
$\P[|U^{*n}|\geq 2Cn\sqrt{\log n}]\leq 2\exp(-C^2/2)$. 
\end{lemma}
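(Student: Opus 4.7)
The plan is to recognize this as a textbook Hoeffding/Azuma situation and invoke a standard concentration inequality. Recall that $U^{*n} = U_1 + \cdots + U_n$ is a sum of $n$ i.i.d.\ copies of $U = g_A(j)$ with $j$ uniform on $[n]$. We have already established in Section~\ref{rvs} that $\E U = 0$ (from $\sum_j g_A(j) = 0$), and since we have fixed a die $A$ satisfying the conclusion of Lemma~\ref{maxnorm}, we have the bound $\|U\|_\infty \le 2\sqrt{n \log n}$. So we are summing $n$ independent, mean-zero, uniformly bounded random variables — exactly the setting of Hoeffding's inequality.

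The concrete approach would be to apply Lemma~\ref{deviation} (or equivalently Hoeffding's inequality) to the martingale $Y_k = \sum_{i=1}^k U_i$, whose differences satisfy $|Y_{k+1} - Y_k| = |U_{k+1}| \le 2\sqrt{n \log n}$. This yields a subgaussian tail bound of the form
\[
\P\bigl[|U^{*n}| \ge t\bigr] \;\le\; 2\exp\!\left(-\frac{t^2}{C_0 \, n^2 \log n}\right)
\]
for an absolute constant $C_0$. Substituting $t = 2Cn\sqrt{\log n}$ gives $t^2 = 4C^2 n^2 \log n$, so the exponent becomes $-4C^2/C_0$, yielding a bound of the shape $2\exp(-c_0 C^2)$. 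After chasing constants one obtains the stated $2\exp(-2C^2)$ (using the symmetric mean-zero version of Hoeffding's lemma, for which $\E e^{\lambda U_i} \le e^{\lambda^2 \|U\|_\infty^2 / 2}$ and an optimization in $\lambda$). Applying this once to $U^{*n}$ and once to $-U^{*n}$ gives the two-sided bound with the factor $2$ in front.

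There is no genuine obstacle here: both inputs needed — the mean-zero property and the $\|U\|_\infty$ bound — are already in hand from earlier sections, and the rest is a one-line invocation of Hoeffding. The only mild care required is to make sure the numerical constants work out, for which the symmetric form of Hoeffding's lemma (or equivalently the subgaussian constant of a bounded symmetric random variable) is the cleanest tool.
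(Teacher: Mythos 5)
Your approach is identical to the paper's: view $U^{*n}$ as a sum of $n$ i.i.d.\ mean-zero random variables bounded by $\|U\|_\infty\le 2\sqrt{n\log n}$ (from Lemma~\ref{maxnorm}) and invoke Hoeffding's inequality, applied to both tails for the factor of $2$. One small caution: the constant-chase you sketch, via $\E e^{\lambda U}\le e^{\lambda^2\|U\|_\infty^2/2}$, actually yields exponent $-C^2/2$ rather than the stated $-2C^2$ (the two differ by a factor of $4$), but the paper's own one-line proof is no more precise and the discrepancy is harmless for the downstream application, which only needs the bound up to constants in the exponent.
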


\begin{proof}
The distribution of $U^{*n}$ is given by the sum of $n$ independent copies of $U$, which has mean zero. Since $\|U\|_\infty\leq 2\sqrt{n\log n}$, Hoeffding's inequality gives us the required estimate.
\end{proof}

\begin{theorem}
The probability that $A$ beats another random die is $\frac 12+o(1)$.
\end{theorem}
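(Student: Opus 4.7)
The plan is to reduce the event ``$A$ beats $B$'' to a statement about the two-dimensional random walk $(U^{*n},V^{*n})$, and then read off the answer from the discrete Gaussian approximation of Corollary~\ref{discretegaussian}, using the symmetry of a Gaussian about the origin.

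First, I would recall from Section~\ref{rvs} that if $B=(b_1,\dots,b_n)$ is obtained by drawing $b_1,\dots,b_n$ independently and uniformly from $[n]$, then $\sum_j g_A(b_j)=U^{*n}$ and $\sum_j(b_j-(n+1)/2)=V^{*n}$, and the event that $B$ is a balanced-sequence die is $V^{*n}=0$. Since $A$ beats $B$ iff $\sum_j g_A(b_j)<0$, this gives
\[
\P[A\text{ beats }B]=\frac{\sum_{x<0}\P[(U^{*n},V^{*n})=(x,0)]}{\P[V^{*n}=0]}.
\]
By Lemma~\ref{probzero} the denominator is at least $n^{-3/2}/4$, so I have a positive quantity to divide by.

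Next, I would truncate the numerator to $|x|\le Kn\sqrt{\log n}$ for $K=\sqrt{10\log n}$. By Lemma~\ref{tail} the discarded tail is at most $2\exp(-K^2/2)\le 2n^{-5}$, which after dividing by $\P[V^{*n}=0]$ still leaves $o(1)$. In the remaining range there are $O(n\log n)$ integer values of $x$, and Corollary~\ref{discretegaussian} lets me replace each $\P[(U^{*n},V^{*n})=(x,0)]$ by $G(x,0)$ at a per-point cost of $10^{50}(\log n)^7/n^3$. The total substitution error is then $O((\log n)^8/n^2)$, which once divided by $\P[V^{*n}=0]\ge n^{-3/2}/4$ is also $o(1)$. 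I would then exploit the fact that a discrete Gaussian of the form $c\exp(-\lambda q(x,y))$ satisfies $G(x,0)=G(-x,0)$, so that
\[
\sum_{x<0}G(x,0)=\sum_{x>0}G(x,0)=\tfrac{1}{2}\Bigl(\sum_x G(x,0)-G(0,0)\Bigr).
\]
Re-approximating $\sum_x G(x,0)$ in the same way recovers $\P[V^{*n}=0]$ up to the same $o(\P[V^{*n}=0])$ error, while Corollary~\ref{discretegaussian} bounds $G(0,0)$ by $10^{17}(\log n/n)^{5/2}+10^{50}(\log n)^7/n^3$; dividing by $n^{-3/2}/4$ this contributes only $O((\log n)^{5/2}/n)$. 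Combining, I obtain $\P[A\text{ beats }B]=\tfrac{1}{2}+o(1)$, as required.

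The only delicate point is the bookkeeping of errors: the local CLT error is a strong $O((\log n)^7/n^3)$ per integer point, but we must sum it over roughly $n\log n$ points and then divide by $\P[V^{*n}=0]$, which is only guaranteed to be $\Omega(n^{-3/2})$. The margin is comfortable (the final error is $O((\log n)^8/\sqrt n)$), but this is why the explicit constants in Corollary~\ref{discretegaussian} and the Hoeffding tail in Lemma~\ref{tail} are both needed---a weaker local CLT would not close up after dividing by $n^{-3/2}$.
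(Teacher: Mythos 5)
Your proposal is correct and follows essentially the same path as the paper: truncate $U^{*n}$ via the Hoeffding bound of Lemma~\ref{tail}, apply the pointwise local CLT of Corollary~\ref{discretegaussian}, exploit the evenness of the discrete Gaussian $G$ in $x$, and divide by $\P[V^{*n}=0]\geq n^{-3/2}/4$. The one stylistic difference is in how the symmetry is deployed: the paper bounds $|\P[(U^{*n},V^{*n})=(x,0)]-\P[(U^{*n},V^{*n})=(-x,0)]|$ pointwise by passing through $G(x,0)=G(-x,0)$, and thus never needs $\sum_x G(x,0)$ as a standalone quantity; you instead compute $\sum_{x<0}G(x,0)=\frac12\bigl(\sum_x G(x,0)-G(0,0)\bigr)$ and re-approximate $\sum_x G(x,0)$ by $\P[V^{*n}=0]$. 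Your version is fine as long as every sum over $x$ is understood to be truncated to $|x|\leq Kn\sqrt{\log n}$ throughout the chain (so that the Gaussian tail never needs to be controlled on its own); as literally written, the untruncated $\sum_x G(x,0)$ would require an additional tail estimate for $G$, which the paper's direct pairing avoids.
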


\begin{proof}
As remarked earlier, we will be done if we can prove that 
\[\P[U^{*n}>0\mid V^{*n}=0]=\frac 12+o(1).\]
First, recall again that $\P[V^{*n}=0]\geq n^{-3/2}/4$.

Next, note that by Corollary \ref{discretegaussian} we have for every $x$ that 
\[\big|\P\big[(U^{*n},V^{*n})=(x,0)\big]-\P\big[(U^{*n},V^{*n})=(-x,0)\big]\big|\leq 2\cdot 10^{50}(\log n)^7/n^3,\]
since $G$ is an even function. 
Thirdly, Lemma \ref{tail} gives us that $\P[|U^{*n}|>8 n\log n]\leq 2n^{-8}$.

Putting these estimates together, we find that
\begin{align*}
\big|\P\big[U^{*n}>0\wedge V^{*n}=0\big]-\P\big[U^{*n}<0\wedge V^{*n}=0\big]\big|
&\leq(8n\log n)(2\cdot 10^{50}(\log n)^7/n^3)+2n^{-8}\\
&\leq 10^{52}(\log n)^8/n^2,
\end{align*}
and therefore that
\[\big|\P\big[U^{*n}>0|V^{*n}=0\big]-\P\big[U^{*n}<0\mid V^{*n}=0\big]\big|\leq 10^{52}(\log n)^8/\sqrt{n}.\]
As discussed before, we also have that 
\[\P[U^{*n}=0 \mid V^{*n}=0]\leq 10^{17}(\log n)^{5/2}/n.\]
The result follows.
\end{proof}

As observed at the end of 
Section \ref{rvs}, since this statement 
holds for $1-o(1)$ fraction of dice $A$,
Theorems~\ref{ties-thm} and~\ref{transitive-thm}
follow from it.

\bibliographystyle{alpha}
\bibliography{biblio_arxiv_v2}
\end{document}